\newtheorem{theorem}{Theorem}[section]
\numberwithin{equation}{section}
\newcommand{\TheTitle}{Consistency Analysis for Massively Inconsistent Datasets in Bound-to-Bound Data Collaboration}
\title{{\TheTitle}\thanks{This work was supported by the U.S. Department of Energy, National Nuclear Security Administration, 
under Award Number DE-NA0002375.}}
\author{
  Arun Hegde\footnotemark[2]  
  \and
  Wenyu Li\footnotemark[2]
  \and
  James Oreluk\footnotemark[2]
  \and
  Andrew Packard\footnotemark[2]
  \and
 Michael Frenklach\thanks{Department of Mechanical Engineering, University of California, Berkeley, CA 94720-1740
    (arun.hegde@berkeley.edu, wenyuli@berkeley.edu, jim.oreluk@berkeley.edu, apackard@berkeley.edu, frenklach@berkeley.edu).}
}
\date{} 
\begin{document}

\maketitle

\begin{abstract}
Bound-to-Bound Data Collaboration (B2BDC) provides a natural framework for addressing both forward and inverse uncertainty quantification problems. In this approach, QOI (quantity of interest) models are constrained by related experimental observations with interval uncertainty. A collection of such models and observations is termed a dataset and carves out a feasible region in the parameter space. If a dataset has a nonempty feasible set, it is said to be consistent. In real-world applications, it is often the case that collections of models and observations are inconsistent. Revealing the source of this inconsistency, i.e., identifying which models and/or observations are problematic, is essential before a dataset can be used for prediction. To address this issue, we introduce a constraint relaxation-based approach, entitled the vector consistency measure, for investigating datasets with numerous sources of inconsistency. The benefits of this vector consistency measure over a previous method of consistency analysis is demonstrated in two realistic gas combustion examples. 
\end{abstract}



\section{Introduction}
Computational models of complex physical systems must account for uncertainties present in the model parameters, model form, and numerical implementation. Validation of, and prediction from, such models generally requires calibrating unknown parameters based on experimental observations. These observations are uncertain due to the physical limitations of the experimental setup and measuring equipment. In recent years, the topics of verification and validation of complex simulations have undergone much scrutiny  (e.g., \cite{NRCreport, oberkampfbook}), with a particular emphasis on understanding how uncertainty in both models and experimental data are used to inform prediction. Still, validating large-scale models with heterogeneous data, i.e., data of varying fidelity from a multitude of sources, remains a challenge.

The general tenet of the scientific method requires that a proposed model be validated through comparison with experimental data. Ideally, a valid model is one that agrees with the totality of the available data. In practice, this agreement is usually judged by numerical differences between quantities of interest (QOIs) extracted from model predictions and measured data. For instance, Oberkampf and Roy \cite[ch.12]{oberkampfbook} discuss the concept of a \textit{validation metric} as a rigorous means to quantify simulation and experimental differences. Several common strategies for model validation and prediction are probabilistic in nature and employ a Bayesian framework. An example of this can be found in Bayarri et al. \cite{Bayarri071, Bayarri072}, which builds on the seminal work by Kennedy and O'Hagan \cite{OHagan01}. In certain scenarios, however, a less nuanced description of uncertainty can be useful. One such specification, where uncertainty is modeled by set membership constraints, is present in a number of fields, including robust control \cite{packard91,smith92}, robust optimization \cite{Ben09}, engineering design \cite{crespo08}, and computational biology \cite{orth10,price04,palssonbook}. The approach we follow in the present study is Bound-to-Bound Data Collaboration (B2BDC), where uncertainty is modeled deterministically and the notion of validity is encapsulated in the \textit{consistency measure}. 

The B2BDC framework casts the problem of model validation in an optimization setting, where uncertainties are represented by intervals. Within a given (physical) model, parameters are constrained by the combination of prior knowledge and uncertainties in experimental data \cite{fps02,seiler06,russi10}. A collection of such constraints is termed a \textit{dataset} and determines a feasible region in the parameter space. If the feasible region is nonempty, the dataset is  \textit{consistent}---a parameter configuration exists for which models and data are in complete agreement. The consistency measure, introduced by Feeley et al. \cite{feeley04}, characterizes this region by computing the maximal uniform constraint tightening associated with the dataset (positive for consistent datasets, negative otherwise). This optimization-based approach towards model validation and, more generally, uncertainty quantification (UQ) has found application in several settings, including combustion science \cite{fpsf04,feeley04,russi08,frenklach07,CS35} and engineering \cite{utah}, atmospheric chemistry \cite{smith06}, quantum chemistry \cite{prl14}, and system biology \cite{feeley06,feeley08,gprotein}.

Comparison of deterministic and Bayesian statistical approaches to calibration and prediction was performed in a recent study \cite{bbayes}. It was demonstrated, using an example from combustion chemistry, that the two methods were similar in spirit and yielded predictions that overlapped greatly. The principal conclusion was that when applicable, the ``use of both methods protects against possible violations of assumptions in the [Bayesian calibration] approach and conservative specifications and predictions using [B2BDC]" \cite{bbayes}.
Additionally, it was found that ``[s]hortcomings in the reliability and knowledge of the experimental data can be a more significant factor in interpretation of results than differences between the methods of analysis." B2BDC also shares conceptual similarities with the methodology of Bayesian history matching \cite{craig96,craig97,vernon10,vernon14}, the difference being that B2BDC works with bounds while history matching retains a probabilistic framework. Both approaches seek to identify regions of the parameter space where there is model-data agreement, flagging the absence of such a region as an indication of discrepancy. The two approaches use different types of surrogate models, statistical emulators for history matching and polynomial response surfaces for B2BDC.

When accumulating data from diverse sources, it is not uncommon to face discrepancies between observed measurements and corresponding model predictions. Oftentimes, a model is only capable of replicating a subset of the experimental measurements and not the whole. B2BDC identifies such a dataset as being inconsistent, implying no single parameter vector exists for which each model-data constraint is satisfied. This mismatch suggests that there are certain constraints which have been misspecified, either through incorrect model form or flawed experimental data. When analyzing an inconsistent dataset, the sensitivities of the consistency measure to perturbations in the various model-data constraints are used to rank the degree to which individual constraints locally contribute to the inconsistency. Relaxing, or even outright deleting, constraints that dominate this ranking provides a starting point to identify model-data constraints responsible for the inconsistency. Iterating this procedure of assessing consistency and modifying high-sensitivity constraints can lead to a circuitous process --- in \Cref{sec:GRIandDLR} we illustrate an example where it cycles in a rather unproductive fashion and leads to excessive constraint modifications. To address this difficulty, we propose a more refined tool to analyze inconsistent datasets, the \textit{vector consistency measure}, that seeks the minimal number of independent constraint relaxations to reach consistency. Introducing additional variables in the form of relaxation coefficients enables an even richer form of consistency analysis, as will be demonstrated on two example datasets: GRI-Mech 3.0 \cite{grimech} and DLR-SynG \cite{dlr}.

The paper is organized as follows. We first present a brief  overview of the B2BDC methodology in \Cref{sec:B2B} with a particular focus on reasoning with the consistency measure. In \Cref{sec:GRIandDLR}, we review the GRI-Mech 3.0 and DLR-SynG datasets and highlight the successes and failures of the standard sensitivity-based consistency analysis. Our main results are in \Cref{sec:vcm}, where we present the vector consistency measure and motivate the inclusion of relaxation coefficients; a complete vector consistency analysis of the GRI-Mech 3.0 and DLR-SynG datasets is presented in \Cref{sec:caseStudy}. We conclude with a summary in \Cref{sec:summary} and suggest a new protocol for model validation with B2BDC. 

\section{Bound-to-Bound Data Collaboration (B2BDC)}
\label{sec:B2B}
\subsection{Datasets and consistency}
In B2BDC, models, experimental observations, and parameter bounds are taken as ``tentatively entertained", in the spirit of Box and Hunter \cite{Box65}. Consistency quantifies a degree of agreement among the trio, formally within the concept of a dataset. Let $\{M_e(x)\}_{e=1}^N$ be a collection of models defined over a common parameter space, where the $e$\textsuperscript{th} model predicts the $e$\textsuperscript{th} QOI. Further, assume that prior knowledge on the uncertain parameters $x \in \mathbb{R}^n$ is available and encoded by $l_i \leq x_{i} \leq u_i$ for $i = 1,\hdots,n$. Thus, the parameter vector $x$ lies in a hyper-rectangle $\mathcal{H}$. An experimental observation of the $e$\textsuperscript{th} QOI comes in the form of an interval $[L_e, U_e]$, corresponding to uncertainty in observation either from experiment or assessed by a domain expert. To each individual QOI, we can associate a feasible set of parameters on which the corresponding model matches the data
\begin{equation}
\label{eq:mdcFeas}
\mathcal{F}_e := \{x \in \mathcal{H}: L_{e} \leq M_{e} (x) \leq  U_{e} \}.
\end{equation}
A system of such model-data constraints, with prior bounds on the input parameters, constitutes a dataset. Assertions expressing additional knowledge or belief are incorporated within this framework. For instance, one may specify relationships among parameters, e.g., $x_3  \geq 7 x_1$, or relationships among QOIs, e.g., $M_1(x) - M_2(x) \leq  4$, as additional constraints in the dataset.

The set of parameters which collectively satisfy all constraints form the feasible set of the dataset, 
\begin{equation}
\label{eq:mainFdef}
\mathcal{F} := \cap_{e=1}^N \mathcal{F}_e = \{ x \in \mathcal{H} :   L_e \leq M_{e} (x) \leq  U_e \ \text{for} \ e = 1,\hdots, N \}.
\end{equation}
If a parameter vector is not in the feasible set, then it violates the prior information and/or at least one of the model-data constraints. If the feasible set is empty, then no parameter vector can lead to agreement between the models and  the data; the models and the data are fundamentally at odds. Conversely, if the feasible set is nonempty, then any feasible point leads to agreement between models and data.

If the feasible set is nonempty, the dataset is said to be consistent. Gauging feasibility cannot be accomplished simply by comparing the number of constraints to the dimension of the parameter space, as the constraints are usually nonlinear and given by inequalities. The following optimization scheme was introduced in \cite{feeley04} to provide a quantitative measure of a given dataset's consistency. 
\begin{equation}
\label{eq:scm}
\begin{aligned}
C_{\text{D}}:= \max_{\gamma,x \in \mathcal{H}} \quad &\gamma \\
\text{s.t.} \quad &  L_e + \frac{(U_e - L_e)}{2} \gamma \leq M_e(x) \leq U_e - \frac{(U_e - L_e)}{2} \gamma \\
& \text{for }e=1,\dots,N.   
\end{aligned}                                           
\end{equation}
Here, $\pm\frac{U_e -L_e}{2} \gamma$ represents a symmetric (and normalized, via $\gamma$) tightening of each model-data constraint. Thus if $C_\text{D}$ is positive, all observation bounds can be simultaneously tightened without eliminating the feasible set; the dataset is consistent. If $C_\text{D}$ is negative, the observation bounds are too restrictive and must be expanded in order for a feasible set to exist; the dataset is provably inconsistent. The above formulation simply asks the following question: ``what is the largest uncertainty reduction such that there still exists a feasible parameter vector?'', or equivalently, ``by how much must my observational uncertainties change in order to validate or invalidate the dataset?'' If a dataset is consistent, one can proceed with the scientific inquiry. A probabilistic approach that similarly quantifies consistency is the method of Bayesian history matching developed in Craig et al. \cite{craig96,craig97}. There, \textit{implausibility measures} are used to screen the parameter space, eliminating regions where the mismatch between (surrogate) model evaluations and data is deemed unacceptable by a prescribed tolerance. 

In prior work \cite{feeley04}, $C_\text{D}$ is referred to as a ``measure of dataset consistency". In what follows, $C_\text{D}$ is termed the \textit{scalar consistency measure} (SCM) to differentiate it from its \textit{vector} counterpart in the present study. 

\subsection{Sensitivity}
\label{sec:sense}
For convenience, we introduce the following notations.
\begin{alignat}{2}
& f_u^{(i)} (x) = x_i- u_i, \quad f_l^{(i)} (x)=  -x_i + l_i  
&& \quad \text{ for }i=1,\dots,n \\
& f_U^{(e)} (x, \gamma) =  M_e(x) - U_e + \frac{U_e - L_e}{2} \gamma &&\quad \text{ for }e=1,\dots,N \\
& f_L^{(e)} (x, \gamma) = -M_e(x) + L_e + \frac{U_e - L_e}{2} \gamma                                        
\end{alignat}
The SCM can then be rewritten as
\begin{equation}
\label{eq:consreform}
\begin{aligned}
C_{\text{D}} = \max_{\gamma,x} \quad& \gamma \\
\text{s.t.} \quad & f_u^{(i)} (x) \leq 0, 
& f_l^{(i)} (x) \leq 0, 
\quad & \text{for }i=1,\dots,n.  \\
& f_U^{(e)} (x, \gamma) \leq 0, 
& f_L^{(e)} (x,\gamma) \leq 0,
\quad & \text{for }e=1,\dots,N.   
\end{aligned}                                           
\end{equation}
and equivalently formulated as 
\begin{align}
\label{eq:consLagrange}
\begin{split}
C_{\text{D}} = &\max_{\gamma,x} \min_{\lambda \geq 0} \gamma - \sum_{i=1}^n (\lambda_u^{(i)} f_u^{(i)} (x) + \lambda_l^{(i)} f_l^{(i)} (x)) - \sum_{e=1}^N (\lambda_U^{(e)} f_U^{(e)} (x, \gamma) + \lambda_L^{(e)} f_L^{(e)} (x, \gamma)).
\end{split}                                           
\end{align}
Interchanging the order of maximization and minimization produces $\overline{C}_{\text{D}}$, the dual of the SCM, which provides a suitable upper bound 
\begin{align}
\label{eq:dual}
\begin{split}
C_{\text{D}}  \leq  \overline{C}_{\text{D}} := \min_{\lambda \geq 0}\max_{\gamma,x} \gamma - \sum_{i=1}^n (\lambda_u^{(i)} f_u^{(i)} (x) + \lambda_l^{(i)} f_l^{(i)} (x)) - \sum_{e=1}^N (\lambda_U^{(e)} f_U^{(e)} (x, \gamma) + \lambda_L^{(e)} f_L^{(e)} (x, \gamma)).
\end{split}                                           
\end{align}
To study the sensitivity of this measure to changes in the constraint bounds (i.e., changes in the experimental uncertainty and parameter bounds), consider the SCM of a perturbed dataset
\begin{equation}
\label{eq:conspert}
\begin{aligned}
C_{\text{D}}(\rho):= \max_{\gamma,x} \quad& \gamma \\
\text{s.t.} \quad & f_u^{(i)} (x) \leq \rho_u^{(i)}, 
& f_l^{(i)} (x) \leq \rho_l^{(i)},
\quad & \text{for }i=1,\dots,n.  \\ 
& f_U^{(e)} (x, \gamma) \leq \rho_U^{(e)}, 
& f_L^{(e)} (x,\gamma) \leq \rho_L^{(e)},
\quad & \text{for }e=1,\dots,N.   
\end{aligned}                                           
\end{equation}
where $\rho$ collects the perturbations. Note, $\rho > 0$ implements a relaxation to the associated constraints, e.g., $l_i - \rho_l^{(i)} \leq x_i \leq u_i + \rho_u^{(i)}$, whereas $\rho<0$ tightens the associated constraints. In the same manner
\begin{align}
\begin{split}
C_{\text{D}}(\rho)  \leq  \overline{C}_{\text{D}}(\rho) :=  \min_{\lambda \geq 0}\max_{\gamma,x} \gamma &+ \sum_{i=1}^n (\lambda_u^{(i)} (\rho_u^{(i)} - f_u^{(i)} (x)) + \lambda_l^{(i)} (\rho_l^{(i)} - f_l^{(i)} (x))) \\
&+ \sum_{e=1}^N (\lambda_U^{(e)} (\rho_U^{(e)} - f_U^{(e)} (x, \gamma)) + \lambda_L^{(e)} (\rho_L^{(e)} - f_L^{(e)} (x, \gamma))).
\end{split}                                           
\end{align}
Let $\lambda^\star$ be the collection of Lagrange multipliers which are optimal with respect to the unperturbed dual $\overline{C}_{\text{D}}(0)$. Then, $\lambda^\star$ is suboptimal with respect to the perturbed dual. Hence, for all $\rho$
\begin{align}
\label{eq:senseIneq}
\begin{split}
C_{\text{D}}(\rho)  \leq \overline{C}_{\text{D}}(\rho) \leq \overline{C}_{\text{D}}(0) &+ \sum_{i=1}^n \lambda_u^{\star (i)} (u_i - l_i) \frac{\rho_u^{(i)}}{(u_i - l_i)}  + \sum_{i=1}^n \lambda_l^{\star (i)} (u_i - l_i) \frac{\rho_l^{(i)}}{(u_i - l_i)} \\
& + \sum_{e=1}^N \lambda_U^{\star (e)} (U_e -L_e) \frac{\rho_U^{(e)}}{(U_e-L_e)} + \sum_{e=1}^N \lambda_L^{\star (e)} (U_e - L_e)  \frac{\rho_L^{(e)}}{(U_e-L_e)}
\end{split}                                           
\end{align}
where we have normalized the perturbations to reflect relative changes (as opposed to absolute changes) in the corresponding constraint's uncertainty interval, thus making them better suited for comparison. The above inequality shows that the perturbed SCM is bounded above by an expression affine in the perturbation, with nonnegative slopes $\lambda^{\star(i)}(u_i - l_i) \geq 0$ and $\lambda^{\star(e)}(U_e - L_e) \geq 0$. We will refer to these slopes as sensitivities (of the consistency measure to perturbations in the observation uncertainty). Large sensitivities indicate that the SCM is highly responsive to corresponding constraint tightenings and small sensitivities indicate that the SCM is rather unresponsive to corresponding constraint relaxations. Relaxing only constraints with zero-valued sensitivities in an inconsistent dataset where $\overline{C}_D (0) < 0$ can never lead to consistency since $C_D(\rho) \leq \overline{C}_D (0) < 0$ regardless of the perturbation.

Computing the dual $\overline{C}_{\text{D}}$ and its associated Lagrange multipliers can often be accomplished using the tools of convex optimization \cite{boydbook}. In the present study,  models $M_e(x)$ take the form of general quadratics and the dual computation reduces to solving a semidefinite program. In the examples presented below, each quadratic model acts as a surrogate for it's underlying computer model. Details on utilizing B2BDC with quadratics and more general classes of models, such as rational quadratics, can be found in \cite{feeley08,russi08}. In particular, \cite{feeley08} presents specialized derivations for sensitivity analysis relevant to the quadratic datasets described in this paper. 

\subsection{Using the scalar consistency measure and sensitivities}
\label{sec:usingSCM}
If a dataset is provably inconsistent, with $C_{\text{D}} < 0$ ($\overline{C}_{\text{D}} <0$ is a sufficient condition), the sensitivities of the SCM provide a means of flagging model-data constraints that may be responsible for the inconsistency. Intuitively, scientific conclusions should be robust relative to small perturbations of observed data and uncertainty. Thus, loosely speaking, we should not expect the SCM $C_{\text{D}}(\rho)$ to change much for small $\rho$. The computational aspects of such an analysis are fully detailed in prior work \cite{feeley04, feeley08}.

The strategy employing SCM is to identify the constraint (or constraints) with the highest sensitivity, modify that constraint or remove it completely, and recompute the SCM for the new dataset. This process is iterated until consistency is reached. However, as will be demonstrated in the subsequent section, such an approach becomes rather ineffective when dealing with what we will refer to as \textit{massively inconsistent} datasets --- datasets with numerous contributors to inconsistency. 

\section{Motivating examples: GRI-Mech 3.0 and DLR-SynG}
\label{sec:GRIandDLR}

\subsection{GRI-Mech 3.0 and DLR-SynG datasets}
In the following paragraphs, we provide a brief overview of two example datasets coming from the field of combustion chemistry. The first dataset, GRI-Mech 3.0, illustrates a successful application of the iterative-SCM methodology. The second dataset, DLR-SynG, demonstrates a major pitfall with this approach, showing that the iteration can result in excessive constraint modifications. The successes and failures of the iterative-SCM method for resolving inconsistent datasets, as displayed by these two examples, motivates the development of an additional tool specific to analyzing inconsistency.        

The first example, the GRI-Mech 3.0 dataset, was originally built to calibrate an underlying kinetic reaction model for pollutant formation in natural gas combustion \cite{grimech}. The kinetic model takes the form of an ODE system representing 325 reversible reactions among 53 different chemical species. The resulting dataset consists of 77 model-data constraints (corresponding to 77 expert-chosen QOIs) in 102 active model parameters. The application of B2BDC to this dataset has been extensively investigated in several prior studies \cite{fps02,feeley04,xy10,bbayes}. 

The second example, the DLR-SynG dataset, is part of an ongoing work to construct a predictive kinetic model for syngas combustion \cite{dlr}. As with GRI-Mech 3.0, DLR-SynG is formulated as an ODE system representing 73 reactions in 17 different chemical species. The resulting dataset is comprised of 159 model-data constraints in 55 uncertain parameters. The experimental uncertainty came in the form of expert-assessed bounds \cite{dlr}. 

\subsection{Consistency analysis of GRI-Mech 3.0 dataset}
An initial application of \Cref{eq:scm} reveals that the SCM lies in the interval $[-0.37, -0.31]$, where the lower bound is a local solution determined using the solver {\fontfamily{pcr}\selectfont fmincon} from MATLAB's Optimization Toolbox \cite{MATLAB} and the upper bound is determined via semidefinite programming. The negative sign of the SCM upper bound certifies that the GRI-Mech 3.0 dataset is inconsistent;  the resulting sensitivities are displayed in \Cref{fig:GRISensitivity}.
\begin{figure}[htbp]
  \centering
  \includegraphics[width =\linewidth]{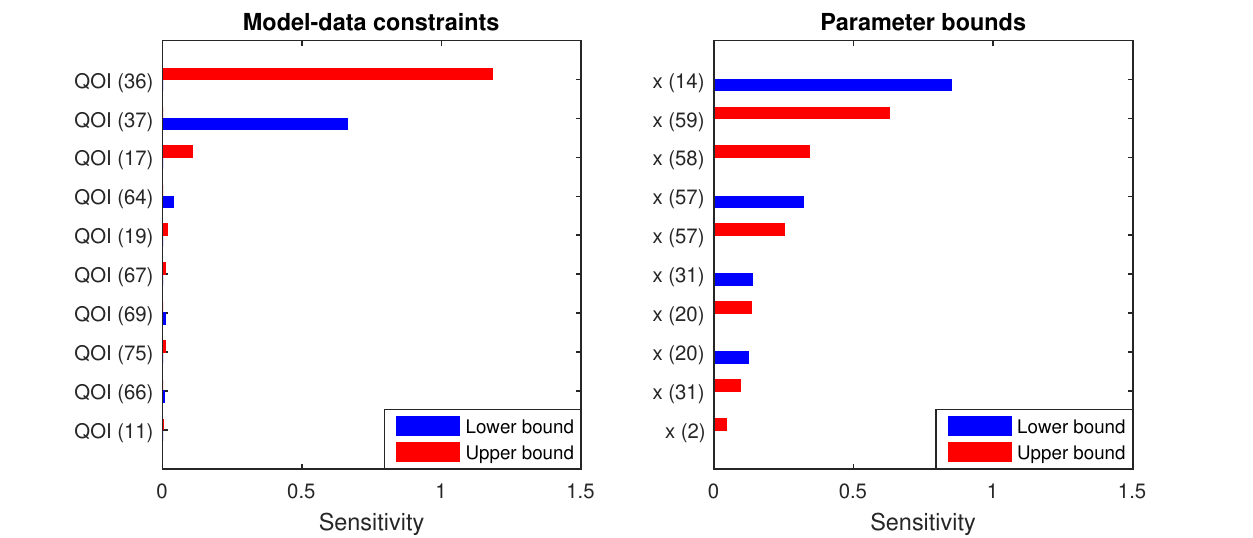}
  \caption{Top ranked sensitivities for the GRI-Mech 3.0 dataset, as described in \Cref{eq:senseIneq}. Left, sensitivities of the SCM to perturbations of the model-data constraints. Right, sensitivities of the SCM to perturbations of the parameter bounds.}
  \label{fig:GRISensitivity}
\end{figure}
It is immediately apparent that relatively few of the model-data constraints have large sensitivities. In particular, the upper bound to the $36$\textsuperscript{th} model-data constraint and the lower bound to the $37$\textsuperscript{th} QOI were flagged by the sensitivity analysis. Removing QOIs $36$ and $37$ resulted in a consistent dataset, with SCM in the interval $[0.14, 0.22]$. A trial-and-error analysis uncovered that the GRI-Mech 3.0 dataset can also be made consistent by removing only constraint $37$. As for parameters, the prior bounds were assumed accurate and we did not admit any modification. 

\subsection{Consistency analysis of DLR-SynG dataset}
The iterative-SCM approach provided a simple and efficient strategy for resolving the GRI-Mech 3.0 dataset. Repeating this process with the DLR-SynG dataset, however, lead to a wholly different experience. An initial assessment of the SCM resulted in the interval $[-2.02,-1.64]$. The corresponding sensitivities are shown in \Cref{fig:DLRSensitivity1}. 
\begin{figure}[htbp]
  \centering
  \includegraphics[width =\linewidth]{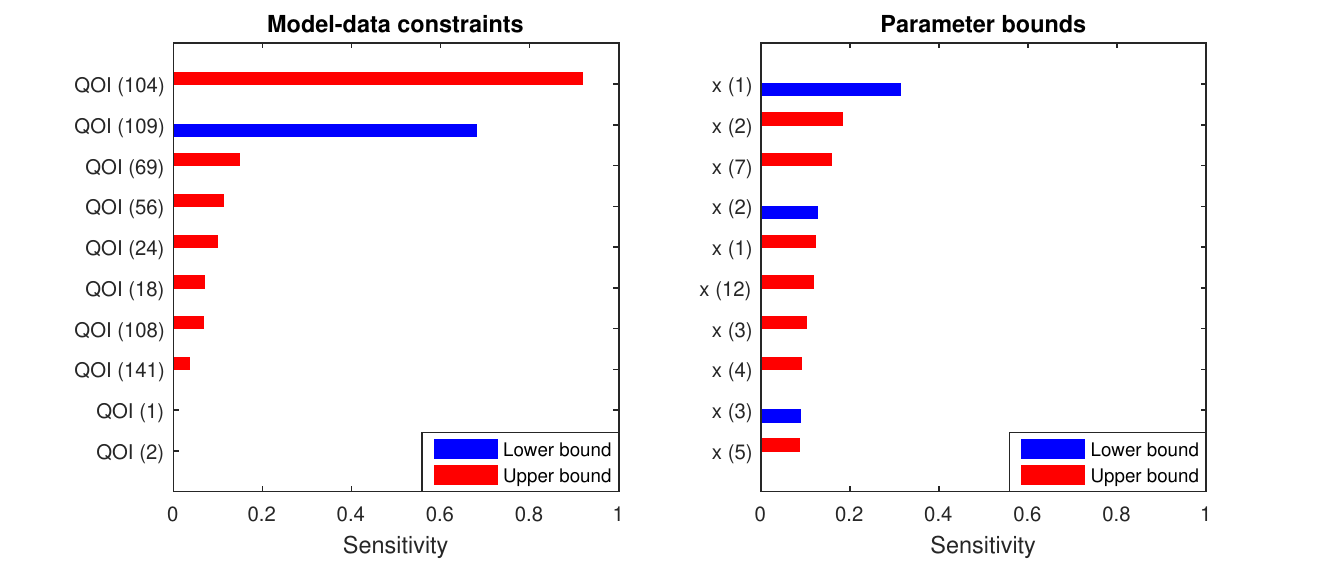}
  \caption{Top ranked sensitivities for the DLR-SynG dataset. Left, sensitivities of the SCM to perturbations of the model-data constraints. Right, sensitivities of the SCM to perturbations of the parameter bounds.}
  \label{fig:DLRSensitivity1}
\end{figure}
At this initial stage, the results look quite similar to the previous case --- relatively few model-data constraints dominate the sensitivity ranking. Deleting the model-data constraint with the largest sensitivity, i.e., QOI $104$, resulted in an inconsistent dataset with SCM  $[-2.02,-1.61]$. The sensitivities of the second consistency assessment are plotted in \Cref{fig:DLRSensitivity2}.
\begin{figure}[htbp]
  \centering
  \includegraphics[width =\linewidth]{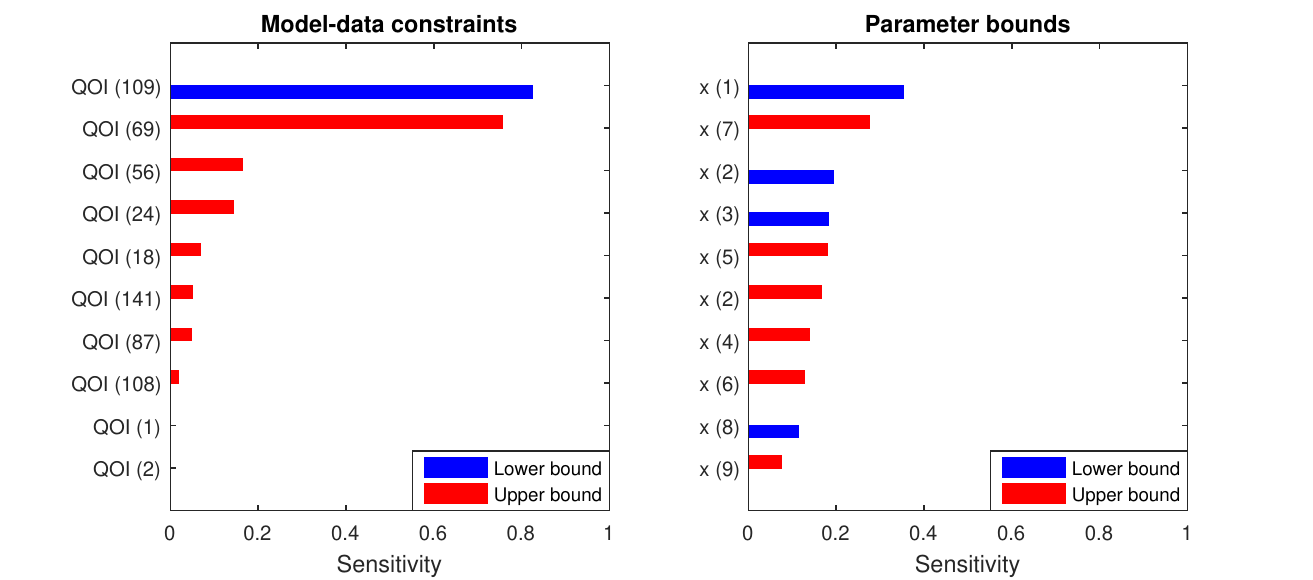}
  \caption{Top ranked sensitivities for the DLR-SynG dataset after the first constraint deletion. Left, sensitivities of the SCM to perturbations of the model-data constraints. Right, sensitivities of the SCM to perturbations of the parameter bounds.}
  \label{fig:DLRSensitivity2}
\end{figure}
Note, the majority of the flagged QOIs were also identified in the first stage of the analysis. QOI 87, however, is new and was previously associated with a near-zero sensitivity value. Similarly, the parameter sensitivities are also slightly different between the first and second stages of the analysis. This process of calculating sensitivities and deleting the  model-data constraint with the highest sensitivity was repeated until the dataset became provably consistent, i.e., the local solution of the SCM was nonnegative, and resulted in the deletion of $73$ model-data constraints. After completing this entire procedure, one might ask the question: ``what would have been different if I had removed the second most sensitive constraint (as opposed to the top most sensitive constraint) in the $k$\textsuperscript{th} stage of the iteration?'' Following this line of thought and deleting the second most sensitive constraint at each iterate led to a total of $56$ constraint removals. At each stage, removing a different constraint may have resulted in different and potentially fewer future constraint removals. An alternative and extreme strategy would be to delete all of the model-data constraints with nonzero sensitivities at each iterate. Doing so leads to an iterative scheme which repeats five times and results in the deletion of $83$ model-data constraints, which does not improve upon the previous results. With each analysis, the concern still remains: ``could consistency be reached with fewer constraint removals?'' After a few stages of the analysis, it becomes apparent that the scale of the inconsistency in DLR-SynG dwarfs that of GRI-Mech 3.0. 
   
\section{Vector consistency}
\label{sec:vcm}
\subsection{Vector consistency measure}
\label{sec:vcm1}
Motivated by the results discussed in the previous section, we explored alternative ways to resolve massively inconsistent datasets. The SCM presented earlier gauges the consistency of a dataset by either uniformly relaxing or tightening all model-data constraints. Now, instead of a single relaxation to all constraints, we consider independent relaxations to each model-data constraint with the aim of finding the fewest number of relaxations required to render the dataset consistent. This leads to the optimization
\begin{equation}
\label{eq:vcmL0}
\begin{aligned}
V_{\|\cdot\|_0} := \min_{x,\Delta_L,\Delta_U, \delta_l, \delta_u} & \|\Delta_L\|_0 + \|\Delta_U\|_0 + \|\delta_l\|_0 + \|\delta_u\|_0 \\
\text{s.t.} 
\qquad & L_e - \Delta_{L}^{(e)} \leq M_e(x) \leq U_e + \Delta_{U}^{(e)} \quad & \text{for } e = 1,...,N \\
& l_i - \delta_{l}^{(i)} \leq x_i \leq u_i + \delta_{u}^{(i)} 
\quad & \text{for } i = 1,...,n 
\end{aligned}
\end{equation}
where the function $\|\cdot\|_0$, sometimes called the 0-norm, expresses the number of nonzero entries of its argument and relaxations ($\Delta, \delta$) have been introduced to both model-data constraints and parameter bounds. If a dataset is consistent, no relaxations are needed and the optimal value is zero. For an inconsistent dataset, solution of the above problem gives the smallest number of bound changes to reach consistency as well as a parameter vector that becomes consistent. Going further and removing the model-data constraints flagged with nonzero relaxations results in a pruned dataset that contains the maximal subcollection of the original model-data constraints that are together consistent. Relaxations in \cref{eq:vcmL0} can be taken as either independent or supplemented with additional constraints accounting for dependencies. For example, one may wish to specify explicit correlations, like $\Delta_U^{(1)} = \Delta_U^{(2)}$.

The above optimization problem is difficult to solve as the 0-norm is non-convex and even linear problems involving this objective are NP-Hard \cite{Natarajan95}. To address the non-convexity, we use the classical technique of replacing the 0-norm with the 1-norm, a well-known convex heuristic for sparsity \cite{donoho04, candes05, brecht10}, producing the optimization
\begin{equation}
\label{eq:vcm}
\begin{aligned}
V_{\|\cdot\|_1} := \min_{x,\Delta_L,\Delta_U, \delta_l, \delta_u} & \|\Delta_L\|_1 + \|\Delta_U\|_1 + \|\delta_l\|_1 + \|\delta_u\|_1 \\
\text{s.t.} 
\qquad & L_e - \Delta_{L}^{(e)} \leq M_e(x) \leq U_e + \Delta_{U}^{(e)} \quad &\text{for } e = 1,...,N\\
& l_i - \delta_{l}^{(i)} \leq x_i \leq u_i + \delta_{u}^{(i)} \quad  &\text{for } i = 1,...,n\\
\end{aligned}
\end{equation}
which we term the \textit{vector consistency measure} (VCM) as we now allow a vector of relaxations. We emphasize that the answer to the above problem should be taken as a theoretical reference point; whether such a relaxation to the experimental bounds is justifiable or whether that relaxation signals a deficiency of the underlying physical model is to be addressed by domain science. What the analysis reveals is that directly implementing the relaxations (expanding the relevant bounds by the determined amount), or removing the model-data constraints with nonzero relaxations, or some combination are all strategies that result in a consistent dataset.  

As a consequence of the above formulation, we note that the VCM will never simultaneously relax both bounds of a single constraint. To illustrate this, without loss of generality, consider datasets with only model-data constraints. Begin with an inconsistent dataset and let $\Delta_L^\star$ and $\Delta_U^\star$ be the VCM-optimal relaxations. Thus, the set
\begin{equation}
\label{eq:newF}
\mathcal{F}' = \{x :  L_e - \Delta_{L}^{\star(e)} \leq M_e(x) \leq U_e + \Delta_{U}^{\star(e)}, \ e = 1,...,N \} 
\end{equation}
is nonempty. Suppose there is a constraint where both relaxations are nonzero. Then there exists a $z \in \mathcal{F}'$ and an index $k$ such that $\Delta_{L}^{\star(k)}$ and $\Delta_{U}^{\star(k)}$ are positive. There are three cases to consider: $M_k(z) = L_k - \Delta_L^{\star(k)}$, $M_k(z) = U_k + \Delta_U^{\star(k)}$, or $L_k - \Delta_L^{\star(k)} < M_k(z) < U_e + \Delta_U^{\star(k)}$. In each case, we find viable relaxations with smaller 1-norms by not including $\Delta_U^{\star(k)}$ (corresponds to case 1), not including $\Delta_L^{\star(k)}$ (corresponds to case 2), or decreasing both relaxations by some small amount (corresponds to case 3). So, relaxations are only applied to a single bound. Hence, the VCM can be rewritten with fewer decision variables,
\begin{equation}
\label{eq:vcmRewrite}
\begin{aligned}
\min_{x,\Delta, \delta} \qquad & \|\Delta\|_1 + \| \delta\|_1 \\
\text{s.t.} 
\qquad & L_e  \leq M_e(x) - \Delta_e \leq U_e \quad &\text{for } e = 1,...,N\\
& l_i \leq x_i - \delta_i \leq u_i  \quad  &\text{for } i = 1,...,n.\\
\end{aligned}
\end{equation}
The sign of the relaxation determines to which bound it is applied. Expressed in this manner, the VCM can also be interpreted as providing a minimal shift in model predictionsand parameters to reach consistency. This is similar to the introduction of a bias term in the probabilistic approach \cite{Bayarri072}.

Moreover, the relaxed constraints will always be met with equality. Again, this is demonstrated by contradiction. Continuing with the notation above, suppose that the constraints with upper bound relaxations are not always met with equality. Then there exists a $z \in \mathcal{F}'$ and an index $k$ such that either $L_k \leq M_k(z) < U_k$ or $U_k \leq M_k(z) < U_k + \Delta_U^{\star(k)}$. So, either the relaxation is unnecessary or it can be made smaller. The same follows for constraints with lower bound relaxations. Hence, the relaxed constraints are met with equality for all parameters in $\mathcal{F}'$. In the context of a VCM analysis, $\mathcal{F}'$ is the feasible set that arises from actually implementing all of the determined relaxations. Thus,  model predictions of the relaxed QOIs are exact on the new feasible set. In practice, $\mathcal{F}'$ may consist of a single point, suggesting no parametric uncertainty and hence no uncertainty in model prediction. Such a result must be regarded with caution, as the newfound certainty comes from a purely mathematical, rather than physical, source. 

Generally, inequality relaxation is a key component of constrained optimization and has been used in different contexts, e.g., the sum of infeasibilities method \cite[p. 580]{boydbook} and the elastic filter \cite[p. 101]{chinneckbook}. Its application to UQ was suggested as early as \cite[p. 41]{feeley08}. The optimal value of the above problem is still difficult to compute as the non-convexity of the models has yet to be addressed. For $(M_e)_{e=1}^N$ described by general quadratic functions, the VCM can be efficiently bounded from below by solving a semidefinite program (\Cref{eq:rvcm} in \Cref{sec:AppendixA}). An upper bound (any local solution) can always be found via generic nonlinear programming techniques applied to \Cref{eq:vcm}, e.g., via MATLAB's {\fontfamily{pcr}\selectfont fmincon}. Note that a positive lower bound on the objective is sufficient to guarantee the inconsistency of a dataset. Any local solution will produce an upper bound on the objective, a feasible set of relaxations, and a parameter vector that becomes feasible once the relaxations are implemented. In instances where the lower bound is non-informative, i.e., equal to zero, the SCM can be used as a supplementary tool to assess the consistency. 

\subsection{Linear examples and counterexamples}
\label{sec:linearExamples}
Although the above formulation for the VCM is attractive, it fails to guarantee some key properties one might wish to have, even in the most basic of cases. In this section, we investigate the VCM problem in the context of linear models and demonstrate manifestations of these issues. The findings motivate the inclusion of relaxation coefficients into the vector consistency framework to allow for additional flexibility.  

Consider the following specialization of the vector consistency problem to linear models, namely, 
\begin{equation}
\label{eq:linVCM}
\begin{aligned}
\min_{x,\delta}& \ \|\delta\|_1 \\
\text{s.t.} & \ Ax \leq b +\delta
\end{aligned}
\end{equation}
where $A \in \mathbb{R}^{m \times n}$ and $b \in \mathbb{R}^m$. Thus, if the constraint $Ax \leq b$ is feasible, the optimal relaxation $\delta^\star$ will be the zero vector. Suppose we start with a feasible dataset $Ax \leq b$, corresponding to some underlying truth, and tighten the constraint by some error vector $t$, made up of 0's and 1's, such that it becomes inconsistent, i.e., $\{x: Ax \leq b_\alpha \} = \varnothing$ for some $\alpha>0$ where $b_\alpha = b-\alpha t$. Note that the introduced error can be either interpreted as an error in the bounds or as a bias in the model. The question posed is as follows: for an increasing $\alpha$, can we guarantee that \Cref{eq:linVCM} will eventually recover the error, i.e., $\delta^\star = \alpha t$? Can we expect to recover the underlying ``true'' feasible set? The answer turns out to be no. This is demonstrated by the following simple counterexample.     

Let $A = \begin{bmatrix} 1.5 \\ -1 \end{bmatrix}$, $b = \begin{bmatrix} 1 \\ 1\end{bmatrix}$, and $t = \begin{bmatrix} 1 \\ 0 \end{bmatrix}$ (the error is only introduced to the first constraint). With these settings, \Cref{eq:linVCM} can be rewritten as,
\begin{equation}
\label{eq:counterEx}
\begin{aligned}
\min_{x,\delta}& \ \|\delta\|_1 \\
\text{s.t.} & \ \begin{bmatrix} 1.5 \\ -1 \end{bmatrix} x - \begin{bmatrix} 1 \\ 1\end{bmatrix} + \alpha \begin{bmatrix} 1 \\ 0 \end{bmatrix}
\leq \begin{bmatrix} \delta_1 \\ \delta_2 \end{bmatrix}
\end{aligned}
\end{equation}
where $\alpha$ is an increasing parameter. For any $\alpha > 2.5$, simple calculation reveals the dataset is inconsistent. The optimal relaxation $\delta^\star$ can be characterized by the intersection of the following two regions,
\begin{equation}
\label{eq:counterExRegion}
\begin{gathered}
 K_\alpha = \{ y \in \mathbb{R}^2: \ \exists \delta \text{ such that } \|\delta\|_1 \leq c^\star, \ \ y \leq \delta \} \\
 G_\alpha = \{y \in \mathbb{R}^2 : y = Ax-b+ \alpha t, x \in \mathbb{R} \},
\end{gathered}
\end{equation}
where $c^\star$ is the smallest value such that the intersection is non-empty (i.e., the solution of \Cref{eq:counterEx}). This intersection is displayed in \Cref{fig:counterEx} for two values of $\alpha$. Although the error has been introduced along the $y_1$ axis, $G_\alpha$ will always intersect $K_\alpha$ along the $y_2$ axis due to its slope. Thus, the VCM will always suggest a relaxation to the second constraint despite the error being introduced to only the first constraint. Simply put, $\delta^\star \neq \alpha t$ for any $\alpha > 0$. Regardless of the magnitude of the error, the VCM will always pick the wrong constraint.
\begin{figure}[htbp]
  \centering
  \includegraphics[width=\linewidth]{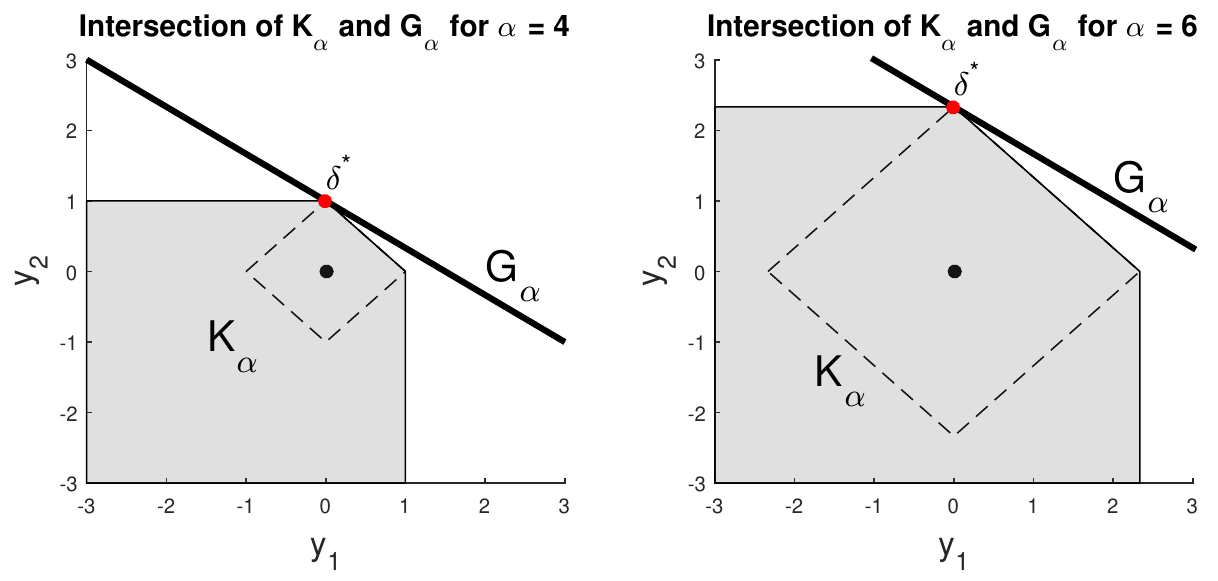}
  \caption{Illustration of the counter example. The dashed lines indicate the 1-norm ball with radius $c^\star$.}
  \label{fig:counterEx}
\end{figure}
\FloatBarrier
The above counterexample illustrates a rather contrived case where the VCM is incapable of recovering a specified error. To explore and test the significance of the result, we investigated the performance of the measure on random instances of \Cref{eq:linVCM}, which can be implemented via linear programming. Let $A \in \mathbb{R}^{50 \times 15}$ have entries selected from a uniform distribution on $[-1,1]$, and let $b \in \mathbb{R}^{50}$ be such that $Ax \leq b$ is feasible. Additionally, let $t \in \mathbb{R}^{50}$ be defined as above with a total of $n_E$ 1's placed in random entries. Let $\alpha>0$ be such that $\{x: Ax \leq b_\alpha \}$ is empty and define the following ratios,    
\begin{equation}
\label{eq:ratios}
\begin{aligned}
& \phi_{E} = \frac{ |\text{supp}(\delta^\star) \cap \text{supp}(t) |} {n_E} \\
& \phi_{\delta} = \frac{ |\text{supp}(\delta^\star) \cap \text{supp}(t) |} {|\text{supp}(\delta^\star)|}
\end{aligned}
\end{equation}
where $\text{supp}(a) = \{i: a_i \neq 0 \}$, $|S|$ returns the cardinality of a set $S$, and $\delta^\star$ is the optimal value of \Cref{eq:linVCM}. The ratio $\phi_E$ measures the fraction of errors correctly identified by the optimization, whereas $\phi_\delta$ measures the number of correctly identified errors relative to the total number of identifications. Both ratios range between zero and one, with $(\phi_E,\phi_\delta) = (1,1)$ indicating perfect identification. Together, these ratios account for both under-identifying and over-identifying the constraints involved in the inconsistency. The results of $10,000$ random trials for $n_E = 1$ and two values of $\alpha$ are displayed in \Cref{fig:hist1}.  
\begin{figure}[htbp]
  \centering
  \includegraphics[width=\linewidth]{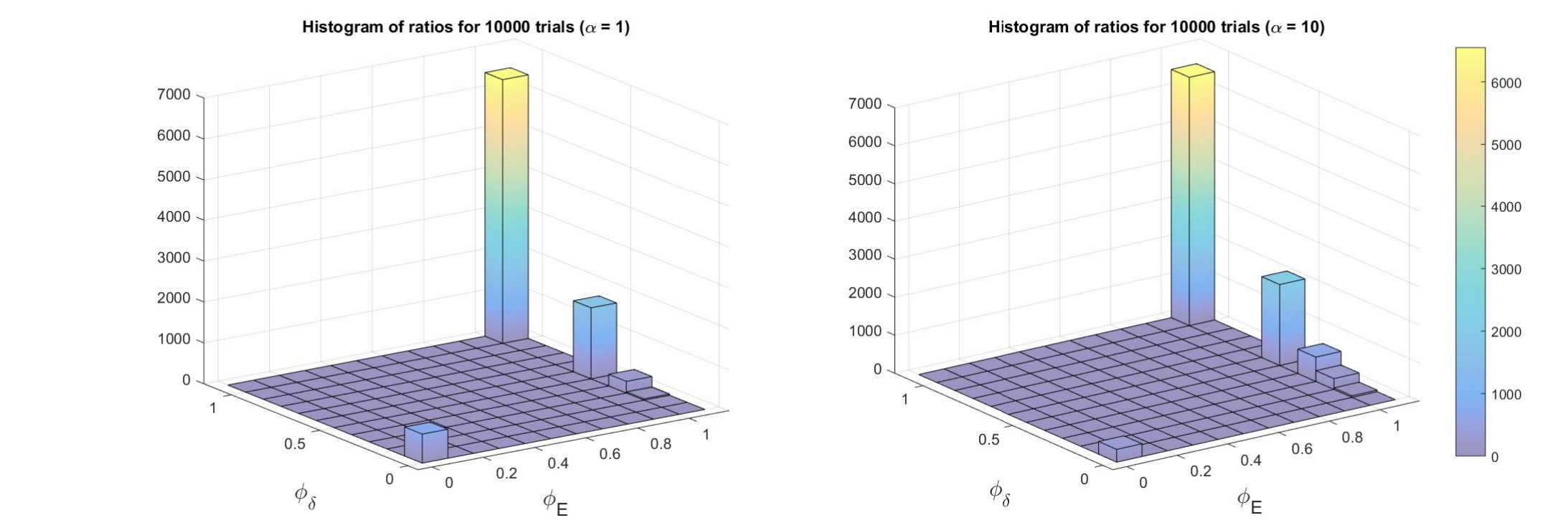}
  \caption{Histogram of ratios for 10,000 random trials with $n_E = 1$.}
  \label{fig:hist1}
\end{figure}
\FloatBarrier
With this setup, the methodology displays perfect identification for the vast majority of trials. With $n_E$ increased to four, however, the identification becomes more challenging as the number of possible explanations for the inconsistency increases.  
\begin{figure}[htbp]
  \centering
  \includegraphics[width=\linewidth]{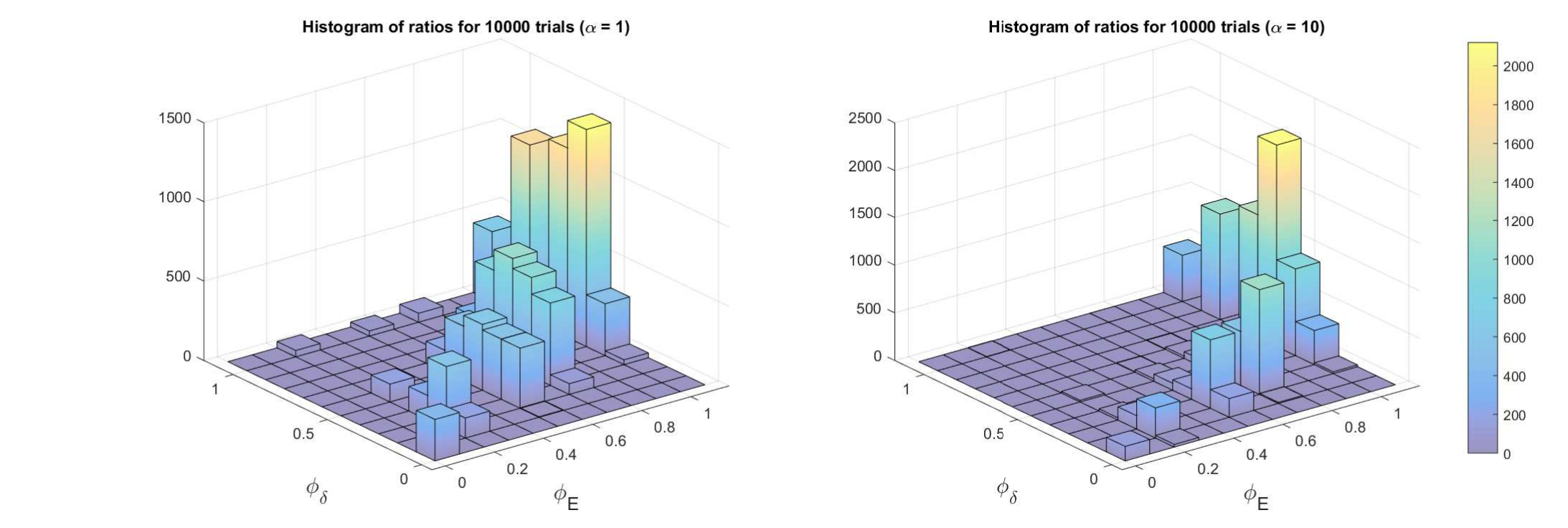}
  \caption{Histogram of ratios for 10,000 random trials with $n_E = 4$.}
  \label{fig:hist2}
\end{figure}
\FloatBarrier
The majority of trials yield $\phi_\delta \approx 0.5$ and $\phi_E = 1$. Thus, the VCM suggests relaxations to all four of the actual errored constraints plus approximately four extra constraints. In a smaller but significant number of trials, the optimization is unable to identify all of the errors and suggests relaxations to other constraints. There are many paths to consistency and in these cases \Cref{eq:linVCM} finds alternative relaxations (other than to the errors in $t$) that have minimal 1-norms. 

In the context of validation, the notion of a single ``correct'' relaxation is somewhat misrepresentative and oversimplifies the issue. As discussed above, for any given inconsistent dataset, there may be a multitude of relaxations that could lead to consistency, with each relaxation leading to different feasible parameter configurations. The presumption of sparsity, which guides the VCM, does not alone provide enough information to pick out this so-called correct relaxation in a reliable manner. To achieve this we would require information from domain experts. This feature can be incorporated into the vector consistency framework through the inclusion of user-specified coefficients to the relaxations. 

\subsection{Augmented vector consistency measure}
When analyzing large-scale simulations, it is often the case that a domain expert has prior knowledge or an opinion that certain experiments may be more reliable than others. Similarly, this notion of reliability is often also present in models and simulations; certain implementations of theory, perhaps representing specific pieces of physics, may be of higher fidelity than other components in a model. Thus, in a B2BDC consistency analysis, model-data constraints need not be treated with equal importance. If a dataset is inconsistent, one should be less willing to relax or modify constraints with high fidelity models and reliable experimental data. In addition to reliability, certain experiments may be more relevant to the intended use of the dataset. These experimental results should also be prioritized in some fashion. This notion of priority can be included in a vector consistency analysis through relaxation coefficients, as displayed in \Cref{eq:avcm}, 
\begin{equation}
\label{eq:avcm}
\begin{aligned}
V_{\|\cdot\|_1}(r) = \min_{x,\Delta_L,\Delta_U, \delta_l, \delta_u} & \|\Delta_L\|_1 + \|\Delta_U\|_1 + \|\delta_l\|_1 + \|\delta_u\|_1 \\
\text{s.t.} \qquad & L_e - R_L^{(e)} \Delta_{L}^{(e)} \leq M_e(x) \leq U_e + R_U^{(e)} \Delta_{U}^{(e)} \quad &\text{for } e = 1,...,N \\
& l_i - r_l^{(i)} \delta_{l}^{(i)} \leq x_i \leq u_i + r_u^{(i)} \delta_{u}^{(i)} \quad &\text{for } i = 1,...,n
\end{aligned}
\end{equation}
where $r$ collects the coefficients $\{R_L,R_U,r_l,r_u\}$. Note, we have returned to the formulation in \Cref{eq:vcm} to highlight that different bounds in a given QOI may be assigned different coefficients. In the above framework, the optimization will prefer relaxing constraints with large relaxation coefficients as that action has greater impact on the objective. In contrast, constraints with small coefficients are protected from relaxation. For instance, setting the parameter coefficients $r_l^{(i)}$ and $r_u^{(i)}$ to zero implies the parameter bounds are absolute and cannot be expanded to reach consistency. Setting model-data relaxation coefficients $R_L^{(e)}$ and $R_U^{(e)}$ to zero would imply that the corresponding constraint bounds are immutable. In essence, these coefficients specify the degree of flexibility afforded to certain constraints. The influence of differing relaxation coefficients can be visualized by considering the previous counterexample in \Cref{fig:counterEx}. The coefficients will distort the 1-norm ball by stretching along directions with large values. Hence, specifying a larger coefficient for relaxations to the first constraint will allow the error to be recovered. See \Cref{sec:AppendixC} for details.
 
In general, relaxation coefficients should characterize the relative importance of certain model-data constraints over others. In the absence of prior knowledge on the constraints, there are several standard coefficient schemes one can turn to that may be informative. For instance, the original vector consistency scheme in \Cref{eq:vcm} is recovered by setting all coefficients to one. Several common coefficient configurations and their interpretations are summarized in \Cref{tab:relaxCoef} for a generic constraint.
\begin{table}[htbp]       
\caption{Example relaxation coefficients for a generic constraint:  $L - r_L \delta_L \leq f(x) \leq U+r_U \delta_U$}
\begin{center}
\begin{tabular}{|c|p{9.1cm}|} \hline
Coefficient & $\delta_L, \delta_U$ --- Interpretation \\ \hline
$r_L = r_U =1$  & Absolute change in bound (``\textit{unit} coefficient'') \\ \hline
$r_L = r_U =(U-L)$ & $\delta_L+\delta_U \sim$ percent expansion in uncertainty interval (``\textit{interval} coefficient'')  \\ \hline
$r_L =|L|, r_U = |U|$ & Percent decrease/increase in lower/upper bound (``\textit{bound} coefficient'')\\ \hline
$r_L=r_U=0$ & No relaxation permitted (``\textit{null} coefficient'') \\ \hline
\end{tabular}
\end{center}
\label{tab:relaxCoef}
\end{table}
\FloatBarrier

\subsection{Using the vector consistency measure}
\label{sec:usingVCM}
If a dataset is consistent, the VCM is less informative than the SCM. On the other hand, if a dataset is inconsistent, the VCM returns a list of relaxations that, if implemented, result in a consistent dataset. Based on the number of nonzero relaxations and their associated magnitudes, one can get a sense of how far a dataset is from consistency. Does a single constraint need to be expanded by some large amount? Or, do many constraints need to be changed by small amounts? Phenomena like massive inconsistency, where many constraints require non-negligible expansions, are immediately recognizable by studying the spectrum of relaxations.

After performing the VCM analysis, multiple actions can be taken. Expanding the experimental bounds by the determined relaxations, removing the flagged model-data constraints, or some combination all lead to consistent datasets. Alternatively, refining the underlying  model, guided by the flagged QOIs and corresponding relaxations, may also lead to consistency. As discussed in \Cref{sec:vcm1}, the decision of how to proceed is to be resolved by domain scientists. In Feeley et al. \cite{feeley04}, model inadequacy was primarily attributed to the parameters and certain experimental bounds were revised. In Slavinskaya et al. \cite{dlr}, the (temporary) removal of QOIs was warranted due to perceived deficiencies in their instrument models rather than the underlying reaction model. In Iavarone et al. \cite{Iavarone17}, a new model form consistent with experimental data was proposed to replace a previously inconsistent model.

The inclusion of relaxation coefficients into the VCM aids in this decision-making process. The VCM analysis can be repeated in a trial-and-error fashion, using different relaxation coefficients, to explore a dataset's inconsistency. If the VCM detects the presence of a massive inconsistency, then this suggests possibly severe limitations in the underlying model as it is unlikely that a large number of diverse experiments would be in error. Model development is an iterative  process  that involves comparison of model prediction and data \cite{boxbook}. The VCM (and SCM) provides a formalization of ``comparing model to data" in this process.

In the following section, this new tool for consistency analysis is demonstrated on the GRI-Mech 3.0 and DLR-SynG datasets for various coefficient configurations. The results can be directly compared to the sensitivity-driven iterative-SCM discussed in \Cref{sec:GRIandDLR}.  

\section{Vector consistency analysis of example datasets}
\label{sec:caseStudy}
\subsection{GRI-Mech 3.0}
An initial application of the VCM with \textit{unit} coefficients (see $\Cref{tab:relaxCoef}$) on the model-data constraints and \textit{null} coefficients on the parameters produces $V_{||\cdot||_1}(r) \in [0.018,0.024]$. The positive lower bound, determined via semidefinite programming (see {\Cref{eq:rvcm} in \Cref{sec:AppendixA} for details)}, provides proof that the GRI-Mech 3.0 dataset is inconsistent. The optimal relaxations suggest decreasing the lower bound of the 37th QOI by 0.014 (a $0.9\%$ decrease)  and increasing the upper bound of the 36th QOI by 0.010 (a $0.67\%$ increase), as diagrammed in \Cref{fig:GRIavcm1}. Performing a vector consistency analysis with \textit{bound} coefficients results in the interval $[0.012, 0.015]$. This choice of relaxation coefficients reveals that consistency can be reached by decreasing just the lower bound of the 37th QOI by $1.5\%$. 
\begin{figure}[htbp]
  \centering
  \includegraphics[width=\linewidth]{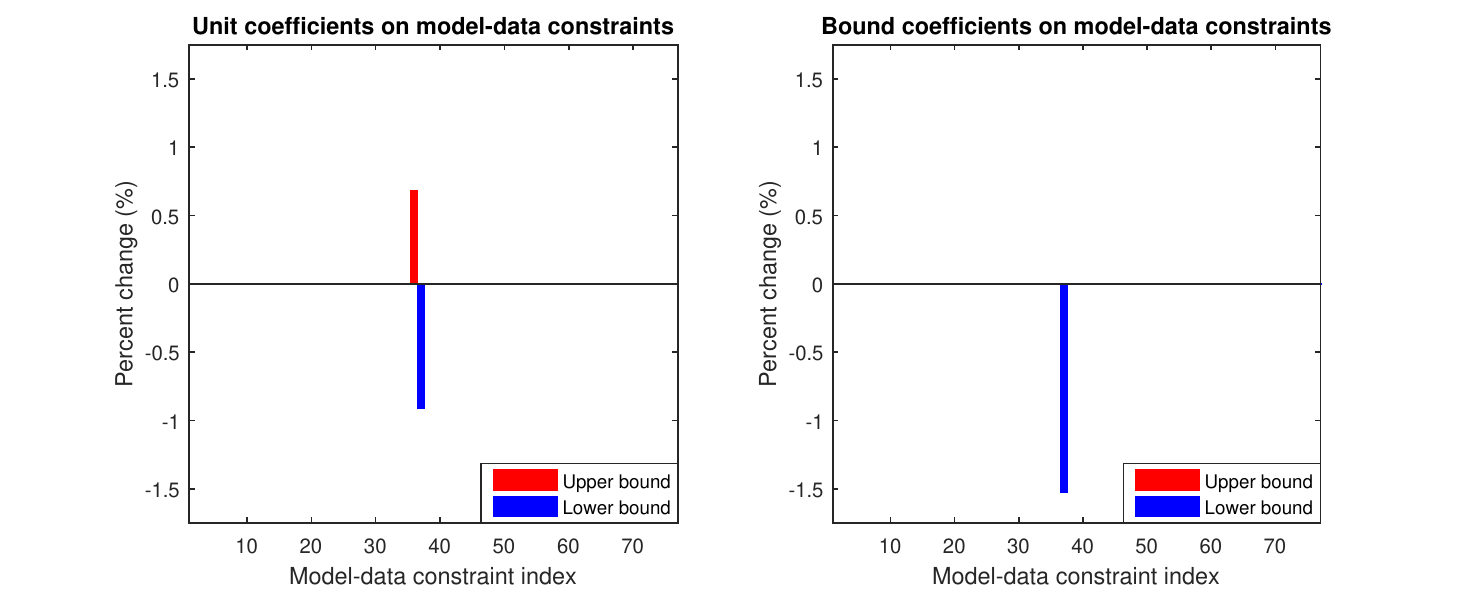}
  \caption{Vector consistency relaxations for the GRI-Mech 3.0 dataset with \textit{unit} coefficients (left) and bound coefficients (right) on the model-data constraints.}
  \label{fig:GRIavcm1}
\end{figure}
\FloatBarrier  
So far, the results match those of the iterative-SCM method. The augmented vector consistency approach, however, allows for new forms of analysis. From the previous paragraph, we have learned that the GRI-Mech 3.0 dataset is nearly consistent; we only need to decrease a single bound by less than $2\%$ to reach consistency. Since the dataset is consistent for the remaining 76 model-data constraints and the inconsistency is attributed to something so minor, it seems likely that the underlying kinetic model is actually valid and the inconsistency is due to QOI 37's specified experimental uncertainty. If QOI 37 is indeed believed accurate, alternative relaxations can be found by testing different relaxation coefficients. For instance, placing a \textit{null} coefficient on the lower bound of constraint 37, \textit{null} coefficients on the parameters, and \textit{unit} coefficients on the remaining model-data constraints produces a VCM in $[0.022,0.037]$. In this case, consistency is reached by increasing the upper bounds of QOIs 35 and 36 by $0.97\%$ and $1.83\%$ respectively. The model-data constraint corresponding to QOI 37 is no longer involved. 

GRI-Mech 3.0's inconsistency can be further explored by alternate coefficient schemes. The initial vector consistency analysis suggested that the upper bound of QOI 36 and the lower bound of QOI 37 were suspect. Suppose all other relaxation coefficients were set to zero. Therefore, we now consider a problem with relaxations to only the two aforementioned bounds. Sampling various coefficient configurations and computing the optimal relaxations produces the trade-off curve, shown in \Cref{fig:tradeoff}.
\begin{figure}[htbp]
  \centering
  \includegraphics[scale = 0.75]{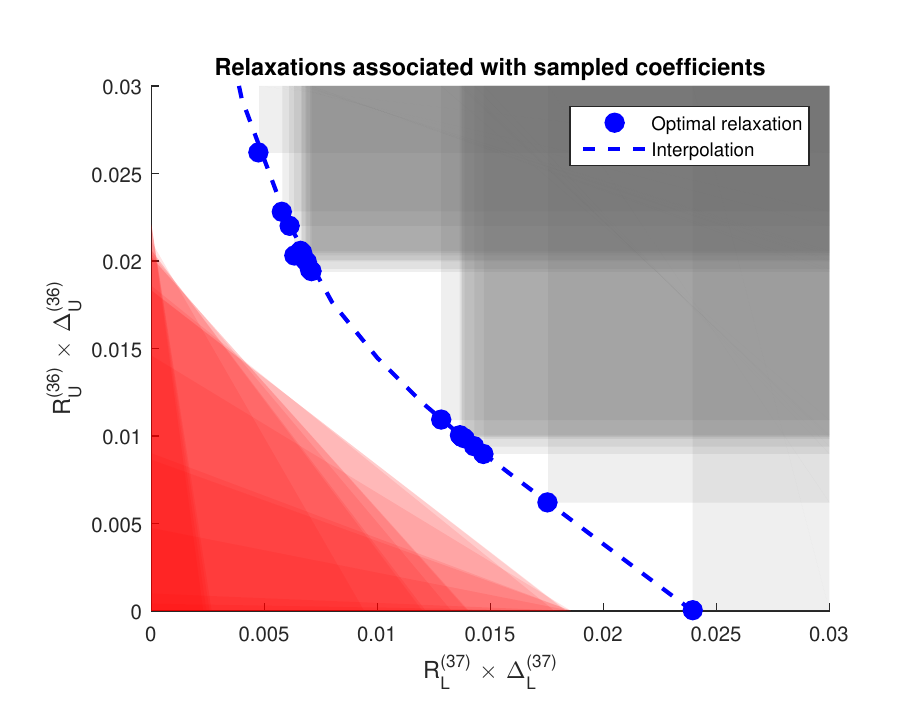}
  \caption{Relaxations for QOIs 37 and 36 due to random selection of relaxation coefficients. Blue dots represent optimal relaxations. The shaded gray region contains other feasible relaxations. Any relaxation in the shaded red region is guaranteed infeasible by a semidefinite program result (see \Cref{sec:AppendixA} for details). }
  \label{fig:tradeoff}
\end{figure}
\FloatBarrier

In the previous examples, we prevented relaxations to the parameter bounds (the prior information) by assigning \textit{null} coefficients. This decision reflected an assumption that those bounds were an accurate and well understood characterization of the parameter uncertainty.
When this is not the case, i.e., when this prior information is not well-known, one may be equally willing, or even prefer, to adjust the parameter bounds. Including \textit{unit} coefficients for both QOI and parameter bounds produces a VCM in $[0, 0.024]$ and identifies the familiar QOIs 36 and 37 as needing relaxation. Attaching \textit{null} coefficients to the QOIs and \textit{unit} coefficients to the parameters, however, produces a VCM in $[0,0.70]$ and requires decreasing only the lower bound of parameter $x_{14}$ by $70\%$. By isolating QOI 37 and parameter $x_{14}$, we can conduct a trade-off analysis similar to the above figure by carefully selecting differing relaxation coefficients. The result of this is shown in $\Cref{fig:tradeoff2}$. It is important to note that in the GRI-Mech 3.0 dataset, the QOI models $\{M_e(x)\}_{e=1}^{77}$ are surrogate models fit over the original parameter range. Therefore, the proper course of action would be to refit these models over the expanded parameter range and reassess consistency. 
\begin{figure}[htbp]
  \centering
  \includegraphics[scale = 0.75]{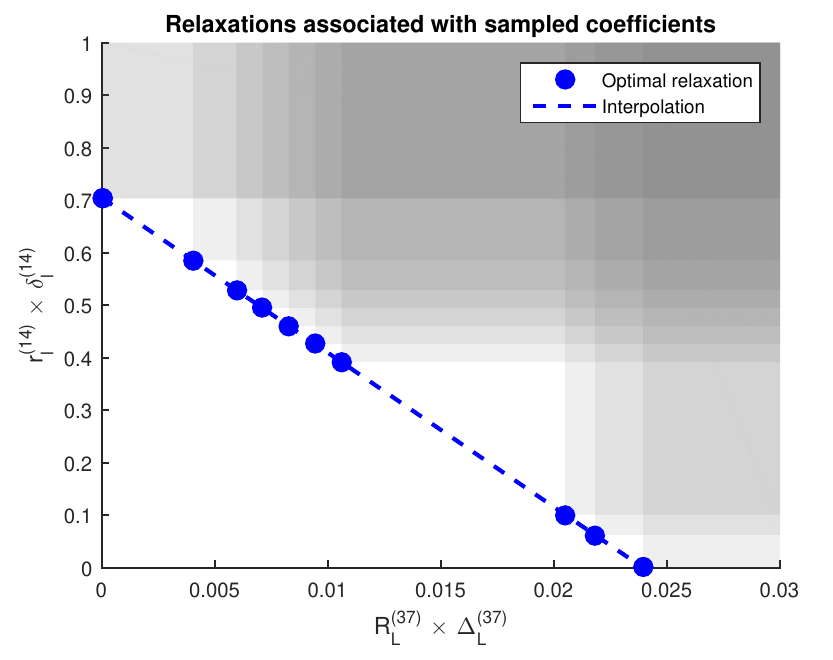}
  \caption{Relaxations for QOI 37 (horizontal axis) and parameter $x_{14}$ (vertical axis) due to  differing relaxation coefficients. Blue dots represent optimal relaxations. The shaded gray region contains other feasible relaxations. There is no region similar to the red one in $\Cref{fig:tradeoff}$ as the semidefinite program produces a lower bound of 0 to the VCM, i.e., the VCM is in $[0, \cdot]$, for all sampled relaxation coefficients.}
  \label{fig:tradeoff2}
\end{figure}
\FloatBarrier

 Computing the VCM with \textit{unit} coefficients for each constraint results in the interval $[0,0.024]$. The semidefinite program approximation, which provides the lower bound of $0$, is non-informative. We cannot conclusively say that the dataset is inconsistent, which contrasts with our earliest analysis where \textit{null} coefficients were applied to the parameters. This example illustrates that using \textit{null} coefficients can be beneficial in proving inconsistency. For instance, the VCM with null coefficients will always produce a value greater than or equal to the VCM without such specification. To see this, consider \Cref{eq:avcm} and note that assigning a \textit{null} coefficient, which effectively removes the corresponding relaxation, is equivalent to fixing that relaxation to zero. As such, using null coefficients can be interpreted as adding ``equal to zero'' constraints to the formulation. Hence, the VCM with null coefficients is a minimization over a further constrained set, giving the result. This statement also carries over to the semidefinite program approximation of \Cref{eq:rvcm}, described in \Cref{sec:AppendixA}.

\subsection{DLR-SynG}
As discussed in \Cref{sec:GRIandDLR}, the DLR-SynG dataset was found to be massively inconsistent and the iterative-SCM method was rather ineffective for consistency analysis. In this application, the augmented VCM fares much better. Assigning \textit{unit} coefficients to the model-data constraints and \textit{null} coefficients to the parameters results in a VCM in $[7.15,12.91]$. This dataset is provably inconsistent. As shown in \Cref{fig:DLRavcm1}, relaxations are suggested to $41$ QOIs. The relatively large number of relaxations and their magnitudes are indicative of a massive inconsistency. Repeating this result with bound coefficients produces a similar outcome, this time with a measure in $[1.13,1.70]$ and relaxing only $37$ QOIs. These results are much improved over the iterative analysis described in \Cref{sec:GRIandDLR}, which, among the strategies attempted, suggested the removal of at least $56$ model-data constraints. 
\begin{figure}[htbp]
  \centering
  \includegraphics[width=\linewidth]{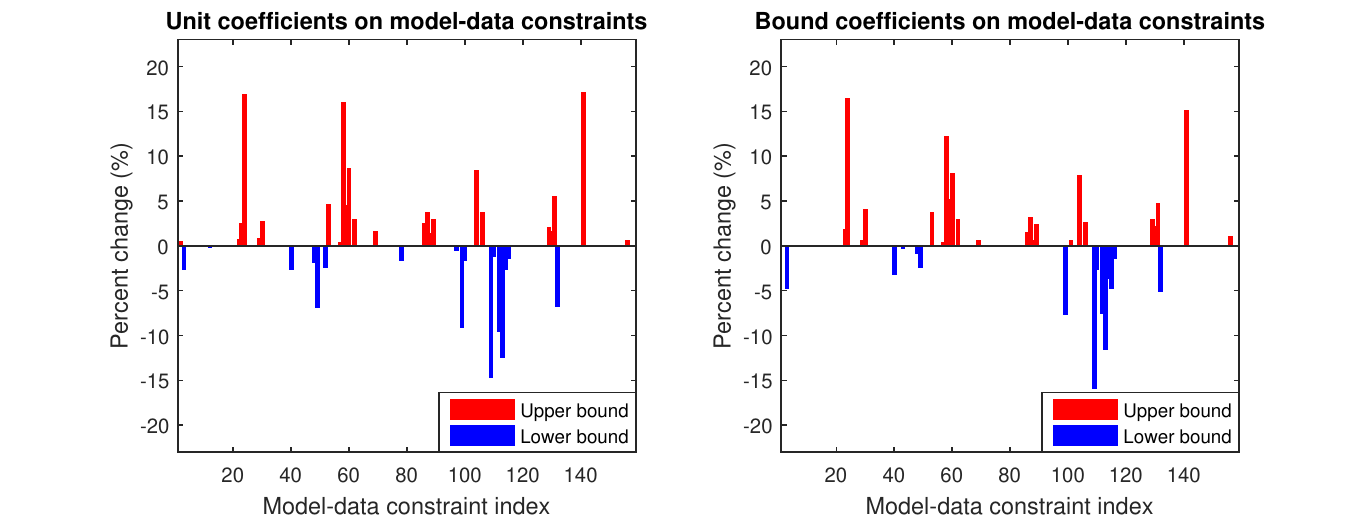}
  \caption{Vector consistency relaxations for the DLR-SynG dataset with \textit{unit} coefficients (left) and bound coefficients (right) on the model-data constraints.}
  \label{fig:DLRavcm1}
\end{figure}
\FloatBarrier
The recommended relaxations are much greater in percentage than in the GRI-Mech 3.0 dataset. For instance, the upper bound of QOI $141$ requires an almost $18\%$ increase. Note that although the earlier iterative-SCM identified QOI $141$ as a potential culprit (\Cref{fig:DLRSensitivity1}), it had assigned many of the QOIs listed in \Cref{fig:DLRavcm1} near-zero sensitivities, suggesting they were not locally influential in resolving inconsistency. As with GRI-Mech 3.0, alternative relaxation coefficient strategies can be used to further the analysis. For example, suppose relaxation was not permitted to QOI $141$. Assigning a \textit{null} coefficient to the upper bound of QOI $141$, \textit{null} coefficients to the parameters, and \textit{unit} coefficients to the remaining model-data constraints produces a VCM in $[8.05, 15.64]$. In this instance, relaxations to $46$ constraints are required to reach consistency (see \Cref{fig:DLRavcm2}).  
\begin{figure}[htbp]
  \centering
  \includegraphics[scale=0.75]{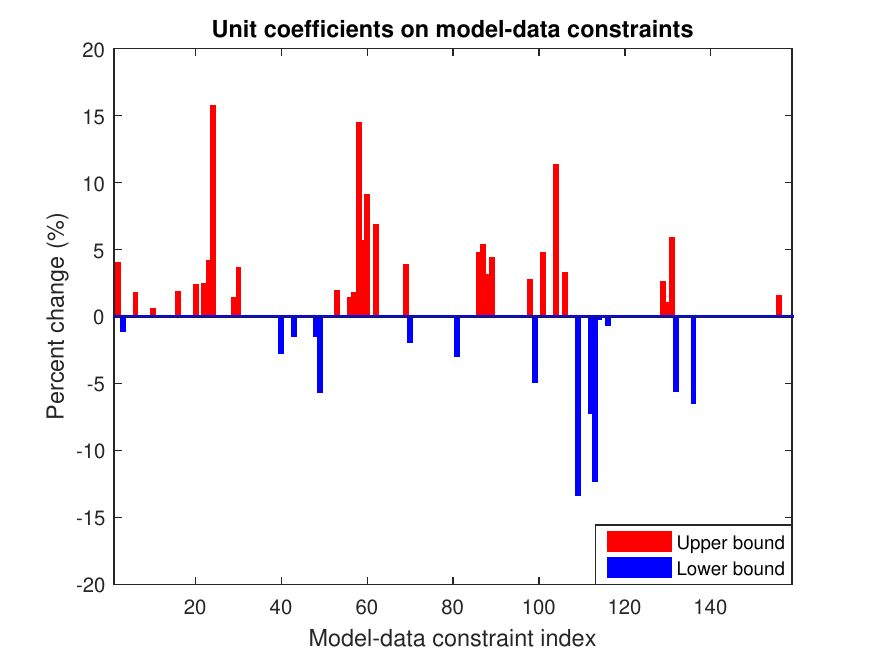}
  \caption{Vector consistency relaxations for the DLR-SynG dataset with a \textit{null} coefficient on the upper bound of QOI \#141, \textit{null} coefficients on the parameters, and \textit{unit} coefficients on the remaining QOIs.}
  \label{fig:DLRavcm2}
\end{figure}
As a final test, implementing \textit{unit} coefficients on both model-data constraints and parameters results in a VCM in $[0.87,9.13]$, relaxing $35$ model-data constraints and $6$ parameter bounds, as shown in \Cref{fig:DLRavcm3}. In contrast, \textit{bound} coefficients for both QOIs and parameters produces a VCM in $[0.37,1.69]$, relaxing $38$ model-data constraints and no parameter bounds.  
\begin{figure}[htbp]
  \centering
  \includegraphics[width=\linewidth]{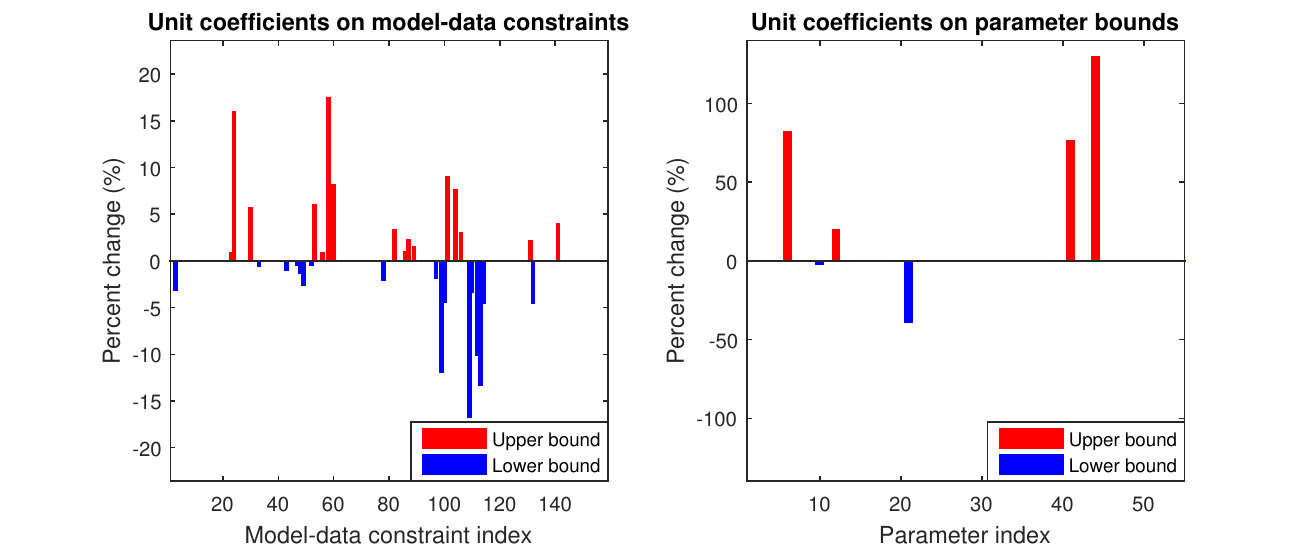}
  \caption{Vector consistency relaxations for the DLR-SynG dataset with \textit{unit} coefficients on both model-data constraints and parameter bounds.}
  \label{fig:DLRavcm3}
\end{figure}
\FloatBarrier
\section{Summary and conclusions}
\label{sec:summary}
Consistency analysis is a fundamental component of the Bound-to-Bound Data Collaboration (B2BDC) methodology and provides a rigorous framework for validating collections of QOI models. In the present work, we develop an alternative to the scalar consistency measure (SCM), which was originally formulated in \cite{feeley04}. Resolving inconsistent datasets using the SCM is indirect and can lead to poor results, particularly when applied to massively inconsistent datasets. To overcome this difficulty, the vector consistency measure (VCM), searches for a sparse set of independent constraint relaxations that leads to consistency. Further, augmenting the VCM with relaxation coefficients allows the user to give preferential treatment to certain models and experimental observations in the dataset. The efficacy of this new strategy is demonstrated on two real-world examples from combustion chemistry: GRI-Mech 3.0 and DLR-SynG. In both cases, the VCM provides new insights and enables a richer form of consistency analysis.

 Consistency measures are tools to accomplish validation. We may suggest the following workflow for practical use of the VCM and SCM. Begin with a model to test against experimental data. Build surrogate models for the specified QOIs and construct a dataset linking the QOI models with experimental bounds. Compute the SCM and its accompanying sensitivities. If consistent, a parameter configuration exists for which the model predictions match the experimental data. If inconsistent and the sensitivities cannot immediately resolve the issue, compute the VCM. Relaxation coefficients can be used to devise a strategy to explore and reconcile the inconsistency. The presence of a massive inconsistency could be indicative of model deficiencies. Once the models and/or observations are revised by domain scientists, the process can be repeated using the updated dataset. We emphasize that the SCM and VCM are quantitative measures of ``model fits data''--- numerical tools to assist domain scientists in resolving inconsistency. In this spirit, we reemphasize that both the bounds and models in B2BDC are tentatively entertained.

\section*{Acknowledgments}
This material is based upon work supported by the U.S. Department of Energy, National Nuclear Security Administration, under Award Number DE-NA0002375. The views and opinions of authors expressed herein do not necessarily state or reflect those of the United States Government or any agency thereof.

\newpage
\bibliographystyle{siamplain}
\bibliography{references}

\newpage
\appendix
\section{Semidefinite programming approximations to the scalar and vector consistency measures} 
\label{sec:AppendixA}
For quadratic models, \Cref{eq:scm} and \Cref{eq:avcm} can be represented more generally as,
\begin{equation}
\label{eq:scm-appendix}
\begin{aligned}
&\textbf{SCM:} & \underset{x \in \mathbb{R}^n,\gamma}{\max} & \qquad \gamma \\
& &\text{s.t.} & \qquad \begin{bmatrix}1 \\ x \end{bmatrix}^\intercal 
Q_e
 \begin{bmatrix}1 \\ x \end{bmatrix} \leq - w_e \gamma \qquad e = 1,...,N \\
& & & \qquad a_i^\intercal \begin{bmatrix}1 \\ x \end{bmatrix} \leq 0\qquad i = 1,...,m \\   
\end{aligned}
\end{equation}
where $Q_e$ is the coefficient matrix of a model (bounds are subsumed into the constant term), the parameter prior $\mathcal{H}$ is defined by constraints involving vectors $a_i$, and $w_e \geq 0$. 
\begin{equation}
\label{eq:vcm-appendix}
\begin{aligned}
&\textbf{VCM:} & \underset{x \in \mathbb{R}^n,\Delta \in \mathbb{R}^N, \delta \in \mathbb{R}^{m}}{\min} & \qquad \textbf{1}_N^\intercal \Delta + \textbf{1}_m^\intercal \delta \\
& &\text{s.t.} & \qquad \begin{bmatrix}1 \\ x \end{bmatrix}^\intercal 
Q_e
 \begin{bmatrix}1 \\ x \end{bmatrix} \leq R_e \Delta_e \qquad e = 1,...,N \\
& & & \qquad  a_i^\intercal \begin{bmatrix}1 \\ x \end{bmatrix} \leq  r_{i} \delta_{i} \qquad i = 1,...,m \\ 
& & & \qquad \Delta \geq 0 \\
& & & \qquad \delta \geq 0  
\end{aligned}
\end{equation}
where $\textbf{1}_d$ is a $d-$dimensional vector of ones. In Feeley et al. \cite[Appendix]{feeley04}, Lagrangian duality was used to formulate a semidefinite program (SDP) that bounds the \textbf{SCM} from above, producing the $\overline{C}_D$ defined in \Cref{sec:sense}. Under mild technical conditions (constraint qualifications), the rank relaxation construction of an SDP presented in this section provides an equivalent solution to a Lagrangian approach \cite{vandenberghe96,fujie97}.
 
For the \textbf{SCM}, note that the constraints for each $e$ and $i$ can be rewritten as
\begin{equation}
\label{eq:scmconstr}
\begin{aligned}
& trace \left( 
\begin{bmatrix} Q_e & 0 \\ 0 & 0 \end{bmatrix}
\begin{bmatrix}1 \\ x \\ \gamma \end{bmatrix} \begin{bmatrix}1 \\ x \\ \gamma \end{bmatrix}^\intercal \right) +trace \left( \begin{bmatrix} 0 & 0 & 0.5 w_e \\ 0 & 0 & 0 \\ 0.5 w_e & 0 & 0 \end{bmatrix}
\begin{bmatrix}1 \\ x \\ \gamma \end{bmatrix} \begin{bmatrix}1 \\ x \\ \gamma \end{bmatrix}^\intercal \right)  
\leq 0 \\
& trace \left( \begin{bmatrix}a_i(1) & 0.5 a_i^\intercal (2{:}n{+}1) & 0 \\ 0.5 a_i(2{:}n{+}1) & 0 & 0\\ 0 & 0 & 0 \end{bmatrix} \begin{bmatrix}1 \\ x \\ \gamma \end{bmatrix}\begin{bmatrix}1 \\ x \\ \gamma \end{bmatrix}^\intercal \right) \leq 0 \\
\end{aligned}
\end{equation}
where $a(j{:}k)$ denotes the $j$th through $k$th entries of vector $a$ and similarly for entries and submatrices of a given matrix. Let $\mathcal{S}^d \subset \mathbb{R}^{d \times d}$ refer to the subset of symmetric matrices.  Rank relaxation (or SDP relaxation) \cite{fujie97} is carried out by noting for a vector $v \in \mathbb{R}^d$, 
\begin{equation} 
Z = \begin{bmatrix}1 \\ v \end{bmatrix} \begin{bmatrix}1 \\ v \end{bmatrix}^\intercal \quad \Leftrightarrow \quad Z(1,1) = 1, Z \succeq 0, rank(Z) = 1
\end{equation}
Dropping the $rank$ constraint, inserting the matrix $Z$ appropriately in \Cref{eq:scmconstr}, and then replacing the constraints in \Cref{eq:scm-appendix} yields an SDP that bounds the \textbf{SCM} from above. This bound can be tightened by including additional constraints constructed from the problem data (i.e., from the given constraints) \cite{sheralibook, anstreicher09, ashraphijou16}. This procedure is more generally characterized by Positivstellensatz refutations \cite{parrilo03}. For a given collection of linear constraints, 
\begin{equation} 
\label{eq:redconstr}
C ^\intercal \begin{bmatrix} 1 \\ v \end{bmatrix} \leq 0 \quad \Rightarrow \quad C^\intercal \begin{bmatrix} 1 \\ v \end{bmatrix} \begin{bmatrix} 1 \\ v \end{bmatrix}^\intercal C \geq 0
\end{equation}
where the inequalities on both sides are applied entrywise. 

Let $A = \begin{bmatrix} a_1 & a_2 & ... & a_m \end{bmatrix} \in \mathbb{R}^{(1+n) \times m}$. An SDP approximation of the \textbf{SCM} is built by letting $v = \begin{bmatrix} x^\intercal & \gamma \end{bmatrix}^\intercal$ and combining the above equations to produce \Cref{eq:rscm}. 
\begin{equation}
\label{eq:rscm}
\begin{aligned}
&\textbf{rSCM:} & \underset{Z \in \mathcal{S}^{2+n}}{\max} & \qquad Z(2{+}n,1) \\
& &\text{s.t.} & \qquad trace \left( Q_e Z(1{:}1{+}n,1{:}1{+}n) \right) + w_e Z(2+n,1) \leq 0 \qquad e = 1,...,N \\
& & & \qquad A^\intercal Z(1{:}1{+}n,1) \leq 0 \\
& & & \qquad A^\intercal Z(1{:}1{+}n,1{:}1{+}n) A \geq 0 \\
& & & \qquad Z(1,1) = 1 \\
& & & \qquad Z \succeq 0    
\end{aligned}
\end{equation}
In cases where $m$ is large, including all of the extra constraints listed in \Cref{eq:redconstr} (accounting for symmetry, repetitions, and implications of $Z \succeq 0$) can become computationally unmanageable, but for the theoretical purpose of this section all are considered. In Ashraphijou et al. \cite{ashraphijou16}, only constraints that are binding at optimality are added. In the examples of \Cref{sec:caseStudy}, only a subset are included in the computation.
 
An SDP relaxation of the \textbf{VCM} proceeds similarly, with $v = \begin{bmatrix} x^\intercal & \Delta^\intercal & \delta^\intercal \end{bmatrix}^\intercal$. For notational convenience, let $d=1{+}n{+}N{+}m$ and $diag(r)$ denote the diagonal matrix with the vector $r$ along the diagonal.    
\begin{equation}
\label{eq:rvcm}
\begin{aligned}
&\textbf{rVCM:} & \underset{Y \in \mathcal{S}^{d}}{\min} & \qquad \textbf{1}_{N+m}^\intercal Y(2{+}n{:}d,1) \\
& &\text{s.t.} & \qquad trace \left( Q_e Y(1{:}1{+}n,1{:}1{+}n) \right) - R_e Y(1{+}n{+}e,1) \leq 0 \qquad e = 1,...,N \\
& & & \qquad A^\intercal Y(1{:}1{+}n,1) - diag(r)Y(2{+}n{+}N{:}d,1) \leq 0 \\
& & & \qquad A^\intercal Y(1{:}1{+}n,1{:}1{+}n) A - diag(r) Y(2{+}n{+}N{:}d,1{:}1{+}n) A-\\ 
& & & \qquad \qquad A^\intercal Y(1{:}1{+}n,2{+}n{+}N{:}d)diag(r) +\\
& & & \qquad \qquad diag(r) Y(2{+}n{+}N{:}d, 2{+}n{+}N{:}d) diag(r) \geq 0\\ 
& & & \qquad Y(2{+}n{:}d,1) \geq 0 \\ 
& & & \qquad Y(1,1) = 1 \\
& & & \qquad Y \succeq 0
\end{aligned}
\end{equation}
As mentioned, $\textbf{SCM} \leq \textbf{rSCM}$ and similarly $\textbf{rVCM} \leq \textbf{VCM}$. An immediate consequence is that, for given relaxation coefficients $R$ and $r$, if $\|\Delta\|_1+\|\delta\|_1 < \textbf{rVCM}$, then $\Delta$ and $\delta$ cannot be feasible with respect to $\textbf{VCM}$ as it contradicts its minimality. Hence, expanding bounds by the amounts $R_e \Delta_e$ and $r_i \delta_i$ does not lead to consistency (e.g., the infeasible region in \Cref{fig:tradeoff}). 

Using \textit{null} coefficients can be helpful in proving inconsistency. Consider two problems, $\textbf{rVCM}_0$ and $\textbf{rVCM}_1$, where $\textbf{rVCM}_0$ shares all but a subset of \textit{null} coefficients with $\textbf{rVCM}_1$. Let $\mathcal{J}_R$ and $\mathcal{J}_r$ contain the indices of the respective \textit{null} coefficients of  $\textbf{rVCM}_0$, i.e., $e \in \mathcal{J}_R \Leftrightarrow R_e = 0$ and $i \in \mathcal{J}_r \Leftrightarrow r_i = 0$. Moreover, let $\mathcal{J} = \{1{+}n{+}\mathcal{J}_R\} \cup \{1{+}n{+}N{+}\mathcal{J}_r\}$ be the adjusted set of indices. Suppose $Y_0^\star$ is the minimizer of  $\textbf{rVCM}_0$ and define $Y \in \mathcal{S}^d$ by
\begin{equation}
Y(i,j) = 
\begin{cases} 
0 & i \in \mathcal{J} \text{ or } j \in \mathcal{J} \\
Y_0^\star(i,j) & \text{ otherwise} \\
\end{cases}
\end{equation} 
As defined, $Y$ is feasible with respect to  $\textbf{rVCM}_0$ and $\textbf{1}_{N+m}^\intercal Y(2{+}n{:}d,1) \leq \textbf{1}_{N+m}^\intercal Y_0^\star(2{+}n{:}d,1)$. Since these summations are of only nonnegative numbers, we must have that $Y_0^\star(i,1) = 0$ for $i \in \mathcal{J}$ and thus $\textbf{1}_{N+m}^\intercal Y(2{+}n{:}d,1) = \textbf{1}_{N+m}^\intercal Y_0^\star(2{+}n{:}d,1)$. Furthermore, note that $Y$ is also feasible with respect to  $\textbf{rVCM}_1$, implying $\textbf{rVCM}_1 \leq \textbf{rVCM}_0$. Since $\textbf{rVCM}>0$ proves inconsistency, we have demonstrated that including extra \textit{null} coefficients may aid in this task.

Finally, the \textbf{rSCM} is a stronger tool than \textbf{rVCM} for proving inconsistency. 
\begin{theorem}
\label{thm:thm1}
If $\textbf{rSCM} \geq 0$, then $\textbf{rVCM} = 0$ for any choice of relaxation coefficients.
\end{theorem}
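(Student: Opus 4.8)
The plan is to prove \Cref{thm:thm1} by exhibiting an explicit feasible point of \textbf{rVCM} whose objective equals zero, constructed from an optimal solution of \textbf{rSCM}. First I would record the free half of the statement: the \textbf{rVCM} objective is $\textbf{1}_{N+m}^\intercal Y(2{+}n{:}d,1)$, a sum of the entries $Y(i,1)$ for $i = 2{+}n, \dots, d$, each constrained to be nonnegative by $Y(2{+}n{:}d,1) \geq 0$. Hence $\textbf{rVCM} \geq 0$ unconditionally, and it suffices to produce a single feasible $Y$ with objective $0$.

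Next, let $Z^\star$ be optimal for \textbf{rSCM}, so that $Z^\star(2{+}n,1) = \gamma^\star = \textbf{rSCM} \geq 0$ by hypothesis. I would build $Y \in \mathcal{S}^{d}$ by copying the $(1{+}n)\times(1{+}n)$ leading block $Z^\star(1{:}1{+}n,1{:}1{+}n)$ (the block indexed by the constant and by $x$) into the corresponding leading block of $Y$, and setting to zero every entry that touches a $\Delta$- or $\delta$-index. Informally this is the choice $\Delta = 0$, $\delta = 0$ with the $x$-statistics inherited from $Z^\star$. Positive semidefiniteness and $Y(1,1)=1$ are then immediate, since a principal submatrix of $Z^\star \succeq 0$ is itself PSD and zero-padding preserves this; the objective vanishes because $Y(2{+}n{:}d,1)=0$, which simultaneously satisfies the nonnegativity constraint.

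I would then verify the remaining constraints of \Cref{eq:rvcm} one block at a time. For the model-data constraints, $Y(1{+}n{+}e,1)=0$, so the constraint reduces to $trace\left(Q_e Z^\star(1{:}1{+}n,1{:}1{+}n)\right) \leq 0$; this is exactly where the hypothesis is used, since the corresponding \textbf{rSCM} constraint gives $trace\left(Q_e Z^\star(1{:}1{+}n,1{:}1{+}n)\right) \leq -w_e \gamma^\star \leq 0$ because $w_e \geq 0$ and $\gamma^\star \geq 0$. The linear parameter constraint reduces directly to $A^\intercal Z^\star(1{:}1{+}n,1) \leq 0$, which is the second \textbf{rSCM} constraint.

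The step requiring the most care is the final quadratic ($A^\intercal(\cdot)A$) constraint of \textbf{rVCM}: I must check that all three terms carrying a factor of $diag(r)$ vanish under the construction, since each multiplies a block of $Y$ ($\delta$-rows and/or $\delta$-columns) that is identically zero. This is precisely what settles the phrase \emph{for any choice of relaxation coefficients} --- the coefficients $R_e$ and the entries of $r$ never touch the nonzero leading block, so the construction is blind to them. The surviving term is $A^\intercal Z^\star(1{:}1{+}n,1{:}1{+}n) A \geq 0$, guaranteed by the third \textbf{rSCM} constraint. Assembling these facts, $Y$ is \textbf{rVCM}-feasible with objective $0$, so $\textbf{rVCM} \leq 0$; combined with the unconditional bound $\textbf{rVCM} \geq 0$, this forces $\textbf{rVCM} = 0$, as claimed.
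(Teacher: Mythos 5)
Your proposal is correct and is essentially the paper's own proof: both construct $Y$ by embedding the leading $(1{+}n)\times(1{+}n)$ block of an \textbf{rSCM}-maximizer $Z^\star$ into $Y$ and zero-padding, then invoke $\textbf{rSCM} \geq 0$ (via $w_e Z^\star(2{+}n,1) \geq 0$) together with the nonnegativity of the \textbf{rVCM} objective. Your version merely makes explicit the constraint-by-constraint feasibility check (including why the $diag(r)$ terms vanish, which justifies ``for any choice of relaxation coefficients'') that the paper compresses into a single sentence.
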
     
\begin{proof}
Suppose $\textbf{rSCM} \geq 0$. Let $Z^\star \in \mathcal{S}^{2+n}$ be the maximizer. Let $R$ and $r$ be some choice of relaxation coefficients. Define $Y \in \mathcal{S}^d$ by
\begin{equation}
Y(i,j) = \begin{cases}
Z^\star(i,j) & i,j \leq 1{+}n \\
0 & \text{ otherwise} \end{cases}
\end{equation}
Since $Z^\star$ is feasible with respect to \Cref{eq:rscm} and  $Z^\star(2+n,1) \geq 0$, $Y$ must be feasible with respect to \Cref{eq:rvcm}. Moreover, $\textbf{1}_{N+m}^\intercal Y(2{+}n{:}d,1) = 0$. Since \textbf{rVCM} is always nonnegative, $\textbf{rVCM} = 0$. 
\end{proof}

The contrapositive of \Cref{thm:thm1} states that if $\textbf{rVCM} > 0$ for some choice of relaxation coefficients, then $\textbf{rSCM}<0$. Hence, the class of datasets for which \textbf{rVCM} proves inconsistency is contained in the class of datasets for which \textbf{rSCM} proves inconsistency. In \Cref{sec:AppendixB}, we present a numerical example where $\textbf{rVCM} = 0$ and $\textbf{rSCM}<0$, showing that the converse of \Cref{thm:thm1} does not hold. This justifies the use of the \textbf{SCM} in the suggested workflow of \Cref{sec:summary}. 

We conclude this section by noting that the above semidefinite programming techniques can be extended beyond quadratic models to polynomial models using techniques described in \cite{lasserre01,parrilo03}. A general discussion of these methodologies applied to the B2BDC framework can be found in \cite{seiler06}.    

\section{A numerical example showing the converse of \Cref{thm:thm1} does not hold} 
\label{sec:AppendixB}
Consider an artificial dataset (of the form described in \Cref{eq:scm-appendix} and \Cref{eq:vcm-appendix}):
\begin{equation}
\begin{aligned}
& Q_1 = \begin{bmatrix} 0.0881 & 0.460 & 0.4769 \\ 0.460 & 0.5613 & 0.4948 \\ 0.4769 & 0.4948 & 0.3550 \end{bmatrix}  
& & Q_2 = \begin{bmatrix} 0.2448 & 0.1876 & 0.1492 \\ 0.1876 & 0.2664 & 0.7218 \\ 0.1492 & 0.7218 & 0.1476 \end{bmatrix} \\
& a_1 = \begin{bmatrix} 0.9207 \\ 0.9295 \\ 0.1368 \end{bmatrix} 
& & a_2 = \begin{bmatrix} 0.8716 \\ 0.0124 \\ 0.7220 \end{bmatrix} \\
& R_1 = R_2 = r_1 = r_2 =1
\end{aligned}
\end{equation}
where $n=2$, $N=2$, and $m=2$. In this example, the only extra constraint added to the \textbf{rSCM} is $a_1^\intercal Z(1{:}3,1{:}3) a_2 \geq 0$ as the other constraints are implied by $Z \succeq 0$, and similarly for the \textbf{rVCM}. Using CVX \cite{cvx,gb08}, we find that $\textbf{rSCM} = -1.0857$ and $\textbf{rVCM} = 0$. To protect against issues of precision in the numerical evaluation of $\textbf{rVCM}$, we round the minimizer $Y^\star$ to the second digit and find that it is still feasible with some margin. 

\section{Linear example continued for the augmented VCM} 
\label{sec:AppendixC}
With relaxation coefficients, the constraint becomes
\begin{equation}
\label{eqC:counterEx}
 \begin{bmatrix} 1.5 \\ -1 \end{bmatrix} x - \begin{bmatrix} 1 \\ 1\end{bmatrix} + \alpha \begin{bmatrix} 1 \\ 0 \end{bmatrix}
\leq \begin{bmatrix} r_1 \delta_1 \\ r_2 \delta_2 \end{bmatrix}.
\end{equation}
Then, the optimal relaxation $\delta^\star$ is characterized by the intersection of two regions,
\begin{equation}
\begin{gathered}
 K_\alpha = \{ y \in \mathbb{R}^2: \ \exists \delta \text{ such that } \|\delta\|_1 \leq c^\star, \ \ \begin{bmatrix} y_1 \\ y_2 \end{bmatrix} \leq \begin{bmatrix} r_1 \delta_1 \\ r_2 \delta_2 \end{bmatrix} \} \\
 G_\alpha = \{y \in \mathbb{R}^2 : y = Ax-b+ \alpha t, x \in \mathbb{R} \},
\end{gathered}
\end{equation}
where $c^\star$ is the smallest value such that the intersection is non-empty (i.e., the solution of \Cref{eqC:counterEx}). Suppose $r_1 = 2$ and $r_2 = 1$. With $\alpha = 4$, $\delta^\star = \begin{bmatrix} 0.75 \\ 0 \end{bmatrix}$, indicating we have now correctly associated the error with the $y_1$ axis. This result is shown in the figure below.  
\begin{figure}[htbp]
  \centering
  \includegraphics[scale = 0.75]{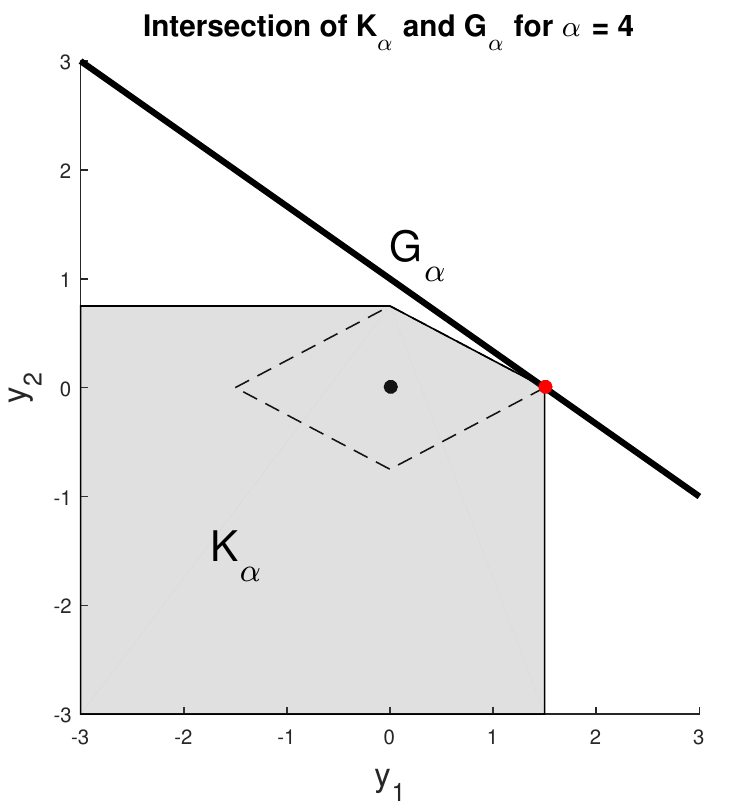}
  \caption{Augmented VCM applied to the linear example with $r_1 = 2$ and $r_2 = 1$. The point of intersection between the two regions, shown in red, is $(y_1, y_2) = (r_1 \delta_1^\star, r_2 \delta_2^\star)$. }
  \label{fig:counterEx2}
\end{figure}
\FloatBarrier

\end{document}